\newif\ifpictures
\picturestrue

\newif\ifcomment
\commenttrue

\documentclass[12pt]{amsart}
\usepackage{latex_base}
\usepackage{macros}

\author{Khazhgali Kozhasov}

\address{Khazhgali Kozhasov, Technische Universit\"at Braunschweig, Institut f\"ur Analysis und Algebra, Universit\"atsplatz 2, 38106 Braunschweig,
 Germany\medskip}

\email{k.kozhasov@tu-braunschweig.de}

\subjclass[2010]{14P05, 14M12, 15A03, 15A15, 15A18, 49Q05, 53A10}

\keywords{Determinantal varieties, minimal submanifolds, singular value decomposition, symmetric matrices with repeated eigenvalues}

\title{On minimality of determinantal varieties}

\begin{document}

\maketitle

\begin{changemargin}{1.5cm}{1.5cm}
{\bf\noindent Abstract.}
We prove that semialgebraic sets of rectangular matrices of a fixed rank, of skew-symmetric matrices of a fixed rank and of real symmetric matrices whose eigenvalues have prescribed multiplicities are minimal submanifolds of the space of real matrices of a given size. 
\end{changemargin}
\vspace{1cm}

\section*{Introduction}

Minimal surfaces are mathematical abstractions of soap films. They are surfaces in $\R^3$ that locally minimize the area. In $1760$s Lagrange posed the problem of finding the surface of least area among all surfaces in $\R^3$ with a given closed boundary curve. Necessarily, a solution to this problem must be a minimal surface. It was shown by Monge in $1776$ that a surface is minimal if and only if its mean curvature is zero everywhere. It is also of importance to look at higher-dimensional analogs of minimal surfaces. These are submanifolds of an Euclidean space (or, more generally, of a Riemannian manifold) with zero mean curvature vector field, see Subsection \ref{sub_mean} for details. A nice account and survey of results about minimal surfaces and minimal submanifolds can be found in \cite{TF1987}.

Even though examples of minimal submanifolds are abound, there are not  so many of those that are defined by algebraic equations \cite{H1967}.

In \cite{Tka} Tkachev showed that the smooth locus $\M_{n,n,n-1}$ of the affine variety of singular real matrices of size $n\times n$ is a minimal hypersurface in the Euclidean space of all $n\times n$ matrices. To prove his result Tkachev shows that the determinant of a square matrix is an eigenfunction of the mean curvature operator, a condition known to be equivalent to the minimality of the associated hypersurface $\M_{n,n,n-1}\subset \{\det =0\}$.
In Theorem \ref{thm_general} we extend Tkachev's result to the semialgebraic submanifold $\M_{m,n,r}$ of $m\times n$ real matrices of rank $r$ and prove its minimality using a local parametrization of $\M_{m,n,r}$ coming from singular value decomposition \eqref{svd}.

In \cite{HLT} Hoppe, Linardopoulos and Turgut proved that the smooth locus $\Sk_{2n,2n-2}$ of the affine variety of singular real skew-symmetric matrices of size $2n\times 2n$ is a minimal hypersurface in the Euclidean space of all $2n\times 2n$ skew-symmetric matrices. The authors of \cite{HLT} show that the pfaffian of a skew-symmetric matrix of an even size is an eigenfunction of the mean curvature operator thus proving minimality of the hypersurface $\Sk_{2n,2n-2}\subset \{\textrm{pf} = 0\}$.
In Theorem \ref{thm_skew} we extend the result of Hoppe et al. to the semialgebraic submanifold $\Sk_{n,2r}$ of $n\times n$ real matrices of rank $2r$ and prove its minimality using a local parametrization of $\Sk_{n,2r}$ coming from the normal form \eqref{normalform} of a skew-symmetric matrix.

Finally, in Theorem \ref{thm_sym} we prove minimality of the set of real symmetric matrices whose eigenvalues have prescribed multiplicities.

\section{Main results}\label{sec:main}

For $m\leq n$ let $\M_{m,n}$ denote the space of $m\times n$ real matrices endowed with \emph{the Frobenius inner product} 
\begin{align}\label{Frob}
\langle A, B\rangle = \mathrm{Tr}(A^tB) = \sum_{i=1}^m\sum_{j=1}^n a_{ij}b_{ij},\quad A=(a_{ij}),\ B=(b_{ij})\in \M_{m,n}.
\end{align}

For a fixed $r\leq m\leq n$ denote by
\begin{align*}
\M_{m,n,r}  = \{A\in \M_{m,n}\, :\, \textrm{rank}(A)=r\} 
\end{align*}
the semialgebraic set of $m\times n$ matrices of rank $r$.
It is well-known that $\M_{m,n,r}$ is the smooth locus of the affine variety
\begin{align*}
  \bM_{m,n,r} = \{A\in \M_{m,n}\,:\,\textrm{rank}(A)\leq r\}
\end{align*}
of $m\times n$ matrices of rank at most $r$.

If $m=n$ and $r=n-1$ the variety
\begin{align*}
  \bM_{n,n,n-1} = \{A\in \M_{n,n}\, :\, \det(A) = 0\}
\end{align*}
of singular $n\times n$ matrices is a hypersurface in $\M_{n,n}$. In \cite{Tka} Tkachev proved that the smooth semialgebraic hypersurface $\M_{n,n,n-1}$ is minimal in $(\M_{n,n},\langle,\cdot,\cdot\rangle)$. A submanifold of a Riemannian manifold is said to be \emph{minimal} if its mean curvature vector field is identically zero, see Subsection \ref{sub_mean} for details. In our first main result we generalize this result to all \emph{determinantal submanifolds $\M_{m,n,r}$}.
\begin{theorem}\label{thm_general}
For $r\leq m\leq n$ the smooth semialgebraic set $\M_{m,n,r}$ is a minimal submanifold of $(\M_{m,n},\langle\cdot,\cdot\rangle)$. 
\end{theorem}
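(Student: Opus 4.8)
The strategy I would follow is to build an explicit local parametrization of $\M_{m,n,r}$ from the singular value decomposition and then compute the mean curvature vector directly in those coordinates, showing it vanishes. Let me think about how this goes.

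Near a point $A_0 \in \M_{m,n,r}$, by SVD we can write any nearby rank-$r$ matrix as $A = U \Sigma V^t$ where $U \in O(m)$, $V \in O(n)$, and $\Sigma$ is the $m\times n$ matrix with a diagonal block $\mathrm{diag}(\sigma_1,\dots,\sigma_r)$ of positive singular values in the top-left corner and zeros elsewhere. But $U, V, \sigma$ are too many parameters — the right count is $\dim \M_{m,n,r} = r(m+n-r)$. So I would parametrize instead by: the singular values $\sigma_1,\dots,\sigma_r > 0$, and the $r$-dimensional subspaces (column space in $\R^m$, row space in $\R^n$) together with the identification of singular directions. Concretely, fixing $A_0 = U_0 \Sigma_0 V_0^t$, write $U = U_0 e^X$, $V = V_0 e^Y$ with $X \in \mathfrak{so}(m)$, $Y \in \mathfrak{so}(n)$ skew-symmetric, but quotient out the stabilizer: the blocks of $X, Y$ acting within the first $r$ coordinates and the blocks acting purely on the last $m-r$ (resp. $n-r$) coordinates do not move $A$. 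So the genuine tangent parameters are $\sigma \in \R^r_{>0}$, the off-diagonal $r \times (m-r)$ block of $X$, the off-diagonal $r\times(n-r)$ block of $Y$, plus the "coupled" rotations mixing the first $r$ coordinates of the two factors.

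**Key computation.**

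At $A_0$ I would compute the first fundamental form (the induced metric) in these parameters and then the second fundamental form, i.e. the normal components of the second derivatives of the parametrization. The mean curvature vector is $\vec{H} = \sum_{a,b} g^{ab}\, \mathrm{II}(\partial_a, \partial_b)$ where $g^{ab}$ is the inverse metric. The crucial simplification, which I expect to make the whole thing work, is that by $O(m)\times O(n)$ equivariance of the construction and the Frobenius metric, it suffices to do the computation at a single representative point $\Sigma_0$ with $U_0 = I_m$, $V_0 = I_n$; there the coordinate vector fields are explicit matrices, the metric $g$ is diagonal (or block-diagonal with easily invertible blocks), and the second derivatives are simple enough to write down. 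One finds that the normal space at $\Sigma_0$ consists of matrices supported on the bottom-right $(m-r)\times(n-r)$ block, and one checks $\sum g^{ab}\,\mathrm{II}(\partial_a,\partial_b)$ projects to zero there. The dependence of the metric entries on $\sigma$ is the delicate part: second derivatives in the "rotational" directions produce terms with factors like $\sigma_i^2$, $\sigma_i\sigma_j$, or $\sigma_i^2 \pm \sigma_j^2$, and these must cancel after weighting by $g^{ab}$; this is exactly the mechanism by which $\det$ (or the relevant minor) is an eigenfunction of the mean curvature operator in Tkachev's argument, now appearing coordinate-wise.

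**The main obstacle.**

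The hardest part, I expect, is bookkeeping: correctly identifying which directions in $\mathfrak{so}(m)\oplus\mathfrak{so}(n)\oplus\R^r$ survive modulo the stabilizer, and organizing the metric and second fundamental form into blocks so that the sum $\sum_{a,b} g^{ab}\,\mathrm{II}(\partial_a,\partial_b)$ can be evaluated block by block. Once the right coordinates are chosen, each block contributes a term that is either manifestly normal-free or cancels against a partner block, but setting up the indices so this is visible — rather than drowning in an $r(m+n-r)$-dimensional Gram matrix — is where care is needed. A secondary point is confirming that $\M_{m,n,r}$ is indeed the smooth locus of $\bM_{m,n,r}$ and that the SVD parametrization is a genuine smooth chart (submersion onto its image with the stabilizer quotiented out correctly); this is classical but should be stated. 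With the computation organized well, minimality of $\M_{m,n,r}$ for all $r \le m \le n$ follows, recovering Tkachev's hypersurface case $m=n$, $r=n-1$ as the special instance where the normal space is one-dimensional.
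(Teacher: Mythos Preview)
Your approach is essentially the paper's: reduce by $O(m)\times O(n)$-equivariance to a diagonal $\Sigma$ with distinct positive singular values, parametrize locally by one-parameter rotations $e^{\mu_{ij}L_{ij}}$ on the left, $e^{-\nu_{k\ell}L_{k\ell}}$ on the right, and shifts $s_h$ of the singular values, then compute the metric and the relevant second derivatives of the parametrization and check that their normal components vanish (the normal space at $\Sigma$ being spanned by $E_{pq}$ with $r<p\le m$, $r<q\le n$), finally extending to repeated singular values by continuity. One point turns out simpler than you anticipate: no cancellation after weighting by $g^{ab}$ is needed, because each relevant second derivative $\partial_{\mu_{ij}}^2 A(\bm 0)$, $\partial_{\nu_{k\ell}}^2 A(\bm 0)$, $\partial_{\mu_{ij}}\partial_{\nu_{ij}}A(\bm 0)$ is already a linear combination of the diagonal units $E_{hh}$ with $h\le r$, hence tangent, so its normal projection is zero before any summation.
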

We give a proof of Theorem \ref{thm_general} in Subsection \ref{sub_general}. Our proof generalizes the proof of minimality of the $4$-dimensional submanifold $\M_{2,3,1}\subset \M_{2,3}$ of $2\times 3$ matrices of rank one given in \cite[p. $37$]{Hoppe}.

One can also consider the projective semialgebraic set $\mathbb{P}(\M_{m,n,r})\subset\mathbb{P}(\M_{m,n})$ of $m\times n$ real matrices of rank $r$. For this one endows the real projective space $\mathbb{P}(\M_{m,n})$ with the standard metric induced from \eqref{Frob}, see Subsection \ref{sub_minimality}. The following corollary is then implied by Theorem \ref{thm_general} and Proposition \ref{conical}.
\begin{corollary}\label{cor_general}
For $r\leq m\leq n$ the smooth projective semialgebraic set $\mathbb{P}(\M_{m,n,r})$ is a minimal submanifold of $\mathbb{P}(\M_{m,n})$.  
\end{corollary}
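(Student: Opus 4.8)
The plan is to deduce the corollary from Theorem \ref{thm_general} and Proposition \ref{conical}, exploiting the fact that $\M_{m,n,r}$ is a cone. Indeed $\mathrm{rank}(tA)=\mathrm{rank}(A)$ for every $t\in\R\setminus\{0\}$, so $\M_{m,n,r}$ is invariant under the free and proper scaling action of $\R\setminus\{0\}$ on $\M_{m,n}\setminus\{0\}$; consequently the quotient $\mathbb{P}(\M_{m,n,r})=\M_{m,n,r}/(\R\setminus\{0\})$ is a smooth semialgebraic submanifold of $\mathbb{P}(\M_{m,n})$, of dimension $\dim\M_{m,n,r}-1$. This is all that has to be said on the set-theoretic side.

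Next I would invoke the general correspondence between the three incarnations of a conical submanifold — the punctured cone $C\subset\M_{m,n}\setminus\{0\}$, its link $\Sigma=C\cap S$ in the Frobenius unit sphere $S\subset\M_{m,n}$, and its projectivization $\mathbb{P}(C)\subset\mathbb{P}(\M_{m,n})$ — which is the content of Proposition \ref{conical}. On one hand, the standard cone formula for the second fundamental form shows that $C$ is minimal in $\M_{m,n}$ if and only if $\Sigma$ is minimal in $S$. On the other hand, with the metric on $\mathbb{P}(\M_{m,n})$ induced from \eqref{Frob} (see Subsection \ref{sub_minimality}), the antipodal quotient map $S\to\mathbb{P}(\M_{m,n})$ is a Riemannian double cover, in particular a local isometry; since $\Sigma$ is antipodally symmetric and maps onto $\mathbb{P}(C)$ by a local isometry, and minimality is a local condition on the mean curvature vector, $\Sigma$ is minimal in $S$ if and only if $\mathbb{P}(C)$ is minimal in $\mathbb{P}(\M_{m,n})$. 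Chaining these two equivalences and feeding in Theorem \ref{thm_general} with $C=\M_{m,n,r}$ yields the statement.

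Within the corollary itself there is essentially no obstacle: once Proposition \ref{conical} is available, one only needs that $\M_{m,n,r}$ is a cone (immediate) and that its projectivization is smooth (immediate from the free proper action). The genuine work sits in Proposition \ref{conical}, and the point I would be most careful about there is matching conventions — one must use exactly the metric on $\mathbb{P}(\M_{m,n})$ for which $S\to\mathbb{P}(\M_{m,n})$ is a local isometry, so that the mean curvature computation downstairs coincides verbatim with the one on the sphere, hence with the Euclidean computation underlying Theorem \ref{thm_general}.
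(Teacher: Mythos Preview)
Your proposal is correct and follows exactly the paper's approach: the corollary is stated as an immediate consequence of Theorem~\ref{thm_general} and Proposition~\ref{conical}, using only that $\M_{m,n,r}$ is a conic submanifold. Your discussion in fact spells out more of the reasoning behind Proposition~\ref{conical} than the paper requires at this point, but nothing you wrote is wrong or superfluous in spirit.
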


Let us denote by
\begin{align*}
  \Sk_n &= \{A=(a_{ij}) \in \M_{n,n}\, :\, a_{ij} = -a_{ji}\ \textrm{for any}\ i,j\} 
\end{align*}
the space of $n\times n$ real skew-symmetric matrices. The rank of a skew-symmetric matrix is even and, in particular, any skew-symmetric matrix of an odd size is singular. In the following, for $1\leq r\leq \floor{n/2}$ let
\begin{align*}
  \Sk_{n,2r} =\Sk_n\cap \M_{n,n,2r}=\{A\in \Sk_n\, :\, \textrm{rank}(A)=2r\}
\end{align*}
denote the semialgebraic set of $n\times n$ skew-symmetric matrices of rank $2r$. It is well-known that $\Sk_{n,2r}$ is the smooth locus of the \emph{skew-symmetric determinantal variety}
\begin{align*}
  \bSk_{n,2r} = \{A\in \Sk_n\,:\, \textrm{rank}(A)\leq 2r\}
\end{align*}
of $n\times n$ skew-symmetric matrices of rank at most $2r$.  If $r=n-1$ the variety
\begin{align*}
  \bSk_{2n,2n-2} = \{A\in \Sk_{2n}\,:\, \det(A) = 0\}
\end{align*}
of singular skew-symmetric matrices is a hypersurface in $\Sk_{2n}$ cut out by \emph{the pfaffian polynomial} that is defined via $\textrm{pf}(A) = \det(A)^2$, $A\in \Sk_{2n}$. 
In \cite{HLT} it was discovered that $\Sk_{2n,2n-2}$ is a minimal hypersurface in $\Sk_{2n}$. In our second main result we generalize this fact to all \emph{skew-symmetric determinantal submanifolds} $\Sk_{n,2r}$.
\begin{theorem}\label{thm_skew}
  For $1\leq r\leq \floor{n/2}$ the smooth semialgebraic set $\Sk_{n,2r}$ is a minimal submanifold of $(\Sk_n,\langle\cdot,\cdot\rangle)$.
\end{theorem}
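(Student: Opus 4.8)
The plan is to mimic the proof of Theorem~\ref{thm_general}, with the singular value decomposition replaced by the normal form \eqref{normalform} of a skew-symmetric matrix, and to use the symmetry of the problem to reduce the computation to one point per $O(n)$-orbit. For $Q\in O(n)$ the conjugation $A\mapsto QAQ^{t}$ is a linear isometry of $(\Sk_{n},\langle\cdot,\cdot\rangle)$ preserving $\Sk_{n,2r}$, and an ambient isometry preserving a submanifold carries its mean curvature vector field to itself. Since by \eqref{normalform} every matrix in $\Sk_{n,2r}$ is $O(n)$-conjugate to a block matrix
\[
  \Sigma_{0}=\begin{pmatrix}\Sigma_{2r}&0\\0&0\end{pmatrix},\qquad \Sigma_{2r}\in\Sk_{2r}\ \text{invertible},
\]
it suffices to prove that the mean curvature vector of $\Sk_{n,2r}$ vanishes at every such $\Sigma_{0}$. (If $2r=n$ there is nothing to prove, since $\Sk_{2r,2r}$ is open in $\Sk_{2r}$; so assume $2r<n$.)

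Next I would write an element of $\Sk_{n}$ in block form $\left(\begin{smallmatrix}B&C\\-C^{t}&D\end{smallmatrix}\right)$ with $B\in\Sk_{2r}$, $C\in\M_{2r,n-2r}$, $D\in\Sk_{n-2r}$. When $B$ is invertible, the Schur complement computation shows that this matrix has rank $2r$ precisely when $D=-C^{t}B^{-1}C$, and one checks that $-C^{t}B^{-1}C$ is skew-symmetric. Hence on a neighborhood of $\Sigma_{0}$ the set $\Sk_{n,2r}$ is the graph of $\Phi(B,C)=\big(B,\,C,\,-C^{t}B^{-1}C\big)$ over an open subset of $\Sk_{2r}\times\M_{2r,n-2r}$, with $\Phi(\Sigma_{2r},0)=\Sigma_{0}$. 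Because the third block is quadratic in $C$, its partial differentials in $B$ and in $C$ both vanish at $C=0$, so $d\Phi_{(\Sigma_{2r},0)}(\dot B,\dot C)=(\dot B,\dot C,0)$. Therefore, at $\Sigma_{0}$, the tangent space of $\Sk_{n,2r}$ is the Frobenius-orthogonal block subspace $\Sk_{2r}\oplus\M_{2r,n-2r}\oplus\{0\}$, the normal space is the $D$-block $\{0\}\oplus\{0\}\oplus\Sk_{n-2r}$, and the induced metric is the standard one in these coordinates.

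Finally I would compute the mean curvature vector $H$ at $\Sigma_{0}$. Since the induced metric there is standard, $H=\sum_{u}(\partial_{u}^{2}\Phi)^{\perp}$, the sum over an orthonormal basis of the tangent space, which I take block-adapted; as the first two blocks of $\Phi$ are linear and the third is normal-valued, this reduces to $H(\Sigma_{0})=\sum_{u}\partial_{u}^{2}\big(-C^{t}B^{-1}C\big)\big|_{(\Sigma_{2r},0)}$, summed over orthonormal bases of the $B$- and of the $C$-directions. The $B$-directions contribute nothing, as $-C^{t}B^{-1}C$ vanishes identically for $C=0$; and differentiating the quadratic map $C\mapsto-C^{t}\Sigma_{2r}^{-1}C$ twice along the standard (orthonormal) basis $\{E_{ab}\}$ of $\M_{2r,n-2r}$ gives $\sum_{a,b}\partial_{E_{ab}}^{2}\big(-C^{t}\Sigma_{2r}^{-1}C\big)=-2\sum_{a,b}E_{ab}^{t}\Sigma_{2r}^{-1}E_{ab}=-2\,\mathrm{Tr}(\Sigma_{2r}^{-1})\,I_{n-2r}$. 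The crucial observation is that $\mathrm{Tr}(\Sigma_{2r}^{-1})=0$, because the inverse of an invertible skew-symmetric matrix is skew-symmetric and hence has zero diagonal; thus $H(\Sigma_{0})=0$ and $\Sk_{n,2r}$ is minimal. I expect the only genuine work to be the bookkeeping that turns the second fundamental form at $\Sigma_{0}$ into this Laplacian of the Schur complement; after that, minimality rests on the one-line identity $\mathrm{Tr}(\Sigma_{2r}^{-1})=0$, which is precisely the skew-symmetric feature (failing for general rectangular matrices) that makes the theorem hold.
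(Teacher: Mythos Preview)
Your proof is correct and takes a genuinely different route from the paper's. The paper parametrizes $\Sk_{n,2r}$ near the normal form $\Omega$ by products of one-parameter orthogonal subgroups, $A(\bm\mu,\bm s)=\bigl(\prod e^{\mu_{ij}L_{ij}}\bigr)^{t}(\Omega+\sum s_hL_{2h,2h-1})\bigl(\prod e^{\mu_{k\ell}L_{k\ell}}\bigr)$, then computes the Gram matrix (block-diagonal with $4\times 4$ blocks $G_{p,q}$), its inverse, and all relevant second partial derivatives, finally checking that each of them lies in the tangent space; a density argument is needed at the end to remove the genericity assumption $\omega_1>\dots>\omega_r$. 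Your Schur-complement graph $\Phi(B,C)=(B,C,-C^{t}B^{-1}C)$ sidesteps all of this: at $(\Sigma_{2r},0)$ the differential of the third block vanishes, so the normal space is exactly the $D$-slot, the pure $B$- and mixed $B$--$C$-derivatives of $-C^{t}B^{-1}C$ vanish at $C=0$, and the only surviving contribution to the mean curvature is $\sum_{a,b}\partial_{E_{ab}}^{2}(-C^{t}\Sigma_{2r}^{-1}C)=-2\,\mathrm{Tr}(\Sigma_{2r}^{-1})\,I_{n-2r}=0$. Your argument is shorter, needs no genericity hypothesis on the $\omega_h$, and as a bonus makes transparent why the symmetric analogue fails (there the graph is $D=C^{t}B^{-1}C$ with $B$ symmetric, and $\mathrm{Tr}(B^{-1})\neq 0$ generically). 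One small bookkeeping point: under the embedding $\bigl(\begin{smallmatrix}B&C\\-C^{t}&D\end{smallmatrix}\bigr)\hookrightarrow\Sk_n$ the $C$-block carries Frobenius weight $2$, so $\{E_{ab}\}$ is orthonormal only up to a factor $\sqrt{2}$; this rescales your sum to $-\mathrm{Tr}(\Sigma_{2r}^{-1})\,I_{n-2r}$ but of course does not affect the vanishing.
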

We give a proof of Theorem \ref{thm_skew} in Subsection \ref{sub_skew}.

One can also consider the projective semialgebraic set $\mathbb{P}(\Sk_{n,2r})\subset\mathbb{P}(\Sk_{n})$ of $n\times n$ real skew-symmetric matrices of rank $r$. One again endows the real projective space $\mathbb{P}(\Sk_{n})$ with the standard metric induced from \eqref{Frob}, see Subsection \ref{sub_minimality}. The following corollary is then implied by Theorem \ref{thm_skew} and Proposition \ref{conical}.
\begin{corollary}\label{cor_skew}
  For $1\leq r\leq \floor{n/2}$ the smooth projective semialgebraic set $\mathbb{P}(\Sk_{n,2r})$ is a minimal submanifold of $\mathbb{P}(\Sk_n)$.
\end{corollary}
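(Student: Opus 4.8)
The plan is to deduce the statement directly from Theorem \ref{thm_skew} together with the general principle recorded in Proposition \ref{conical}: the projectivization of a minimal cone is a minimal submanifold of the corresponding projective space. The first step is to observe that $\Sk_{n,2r}$ is a \emph{cone} in $\Sk_n$, that is, it is invariant under the scaling action $A\mapsto tA$ for every $t\in\R\setminus\{0\}$; this is immediate because $\mathrm{rank}(tA)=\mathrm{rank}(A)$ for $t\neq 0$. Since $r\geq 1$, the origin does not lie in $\Sk_{n,2r}$, so this punctured cone descends to a well-defined subset $\mathbb{P}(\Sk_{n,2r})\subset\mathbb{P}(\Sk_n)$; and because $\Sk_{n,2r}$ is a smooth submanifold of $\Sk_n\setminus\{0\}$ on which $\R\setminus\{0\}$ acts freely and properly, the quotient $\mathbb{P}(\Sk_{n,2r})$ is a smooth submanifold of $\mathbb{P}(\Sk_n)$, of dimension $\dim\Sk_{n,2r}-1$.

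The second step is to invoke Theorem \ref{thm_skew}, which asserts that $\Sk_{n,2r}$ is a minimal submanifold of $(\Sk_n,\langle\cdot,\cdot\rangle)$. The third step is then to apply Proposition \ref{conical} to the minimal cone $\Sk_{n,2r}$ inside the inner product space $(\Sk_n,\langle\cdot,\cdot\rangle)$: minimality of the cone implies minimality of its image $\mathbb{P}(\Sk_{n,2r})$ in $\mathbb{P}(\Sk_n)$ with the standard metric induced from \eqref{Frob}. This is exactly the assertion of the corollary.

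All of the analytic content is already contained in Theorem \ref{thm_skew} and in Proposition \ref{conical} (which reflects the classical equivalence between minimality of a cone, minimality of its link in the unit sphere, and minimality of its projectivization), so no genuine obstacle remains. The only points that require (routine) verification are that the hypotheses of Proposition \ref{conical} hold — namely that $\Sk_{n,2r}$ is a smooth conical submanifold avoiding the origin, which we checked above — and that the metric placed on $\mathbb{P}(\Sk_n)$ in the statement coincides with the quotient metric for which Proposition \ref{conical} is phrased.
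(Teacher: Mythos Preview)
Your argument is correct and matches the paper's approach exactly: the corollary is stated as an immediate consequence of Theorem \ref{thm_skew} and Proposition \ref{conical}, using that $\Sk_{n,2r}$ is a conic submanifold of $\Sk_n$. Your additional verification that the hypotheses of Proposition \ref{conical} are met is routine and consistent with the paper's implicit reasoning.
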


Let us denote by
\begin{align*}
  \Sym_n = \{A=(a_{ij}) \in \M_{n,n}\, :\, a_{ij} = a_{ji}\ \textrm{for any}\ i,j\}
\end{align*}
the space of $n\times n$ real symmetric matrices.
\begin{remark}
  Theorems \ref{thm_general} and \ref{thm_skew} may suggest that \emph{the symmetric determinantal submanifold} $\M_{n,n,r}\cap \Sym_n$ of symmetric matrices of rank $r$ is minimal in $(\Sym_n,\langle\cdot,\cdot\rangle)$. However, it is not in general the case. For example, it is easy to see that the surface
  \begin{align*}
   \left\{a=
    \begin{pmatrix}
      a_{11} & a_{12} \\
      a_{12} & a_{22}
    \end{pmatrix}\,:\, \det(a)=a_{11}a_{22}-a_{12}^2=0\right\}\subset \Sym_2=\R^3
  \end{align*}
  of singular $2\times 2$ real symmetric matrices has non-zero mean curvature.   
\end{remark}

Given a real symmetric matrix $A\in \Sym_n$ let us denote by $\chi_A(t)=\det(t\,\textrm{Id}-A)$ its characteristic polynomial. An eigenvalue $\lambda$ of $A\in \Sym_n$ has \emph{multiplicity} $m$, where $1\leq m\leq n$, if $\chi^{(i)}_A(\lambda)=0$ for $i=0,\dots,m-1$ and $\chi_A^{(m)}(\lambda)\neq 0$.

For a vector $\vec{\kappa}=(\kappa_1,\kappa_2,\dots)$ of non-negative integers such that $1\kappa_1+2\kappa_2+\dots = n$ let us denote by
\begin{align*}
\Sym_{n,\vec{\kappa}} = \{A\in \Sym_n\, :\, \textrm{$A$ has $\kappa_i$ eigenvalues of multiplicity $i$}\}
\end{align*}
the semialgebraic set of $n\times n$ real symmetric matrices that have $\kappa_i$ eigenvalues of multiplicity $i$, $i\geq 1$. Sets $\Sym_{n,\vec\kappa}$ are smooth submanifolds of $\Sym_{n}$ and they form a stratification of $\Sym_n$ with $\Sym_{n,(n)}$ being the unique open stratum \cite{Arnold1972}. They were studied in \cite{Arnold1972, Agrachev, BKL}. We discover a new fact about $\Sym_{n,\vec\kappa}$, namely its minimality.
\begin{theorem}\label{thm_sym}
 For any vector $\vec\kappa=(\kappa_1,\kappa_2,\dots)$ the smooth semialgebraic set $\Sym_{n,\vec\kappa}$ is a minimal submanifold of $(\Sym_n,\langle\cdot,\cdot\rangle)$. 
\end{theorem}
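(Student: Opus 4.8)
The plan is to imitate the strategy used for Theorems \ref{thm_general} and \ref{thm_skew}: produce an explicit local parametrization of $\Sym_{n,\vec\kappa}$ coming from the spectral decomposition, compute the induced metric and the second fundamental form along a convenient tangent frame, and check that the trace of the second fundamental form (the mean curvature vector) vanishes at a chosen point. By the $O(n)$-invariance of the whole picture (the Frobenius metric is $O(n)$-conjugation-invariant and $\Sym_{n,\vec\kappa}$ is an $O(n)$-orbit-type stratum), it suffices to verify minimality at a single well-chosen matrix $A_0$ in each stratum, say a diagonal matrix $A_0 = \mathrm{diag}(\mu_1 I_{d_1},\dots,\mu_s I_{d_s})$ whose distinct eigenvalues $\mu_1,\dots,\mu_s$ have the prescribed multiplicities $d_1,\dots,d_s$ (so that among $d_1,\dots,d_s$ exactly $\kappa_i$ of them equal $i$). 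Near such $A_0$ the stratum is parametrized by $(g, \mu_1,\dots,\mu_s)\mapsto g\,\mathrm{diag}(\mu_1 I_{d_1},\dots,\mu_s I_{d_s})\,g^{t}$ with $g$ ranging over a neighbourhood of the identity in $O(n)$ (equivalently, over the flag-type homogeneous space $O(n)/(O(d_1)\times\cdots\times O(d_s))$) and $\mu_1,\dots,\mu_s$ ranging over distinct reals.

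First I would set up the tangent and normal spaces at $A_0$. Differentiating the parametrization, the tangent space $T_{A_0}\Sym_{n,\vec\kappa}$ splits orthogonally into the ``radial/spectral'' part, spanned by the blocks $E_k := \mathrm{diag}(0,\dots,I_{d_k},\dots,0)$ for $k=1,\dots,s$ (coming from moving the eigenvalues $\mu_k$), and the ``rotational'' part, spanned by the off-diagonal symmetric directions $X_{(k,l)} := \xi e_a e_b^{t} + e_b e_a^{t}$ type blocks living in the $(k,l)$ off-diagonal block for $k<l$, obtained from $[\Omega,A_0]$ with $\Omega$ skew-symmetric; explicitly, a rank-one skew generator $e_a e_b^{t}-e_b e_a^{t}$ with $e_a$ in block $k$ and $e_b$ in block $l$ contributes the tangent vector $(\mu_l-\mu_k)(e_a e_b^{t}+e_b e_a^{t})$. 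The normal space $N_{A_0}\Sym_{n,\vec\kappa}$ is then exactly the space of block-diagonal symmetric matrices that are traceless on each diagonal block, i.e. $\bigoplus_k \{S_k\in\Sym_{d_k}:\mathrm{Tr}(S_k)=0\}$.

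Next I would compute the second fundamental form $\mathrm{II}(U,V)=\big(\nabla_U V\big)^{\perp}$ for $U,V$ running over the above frame, using the ambient flat connection on $\Sym_n$ and the standard formula: extend each basis tangent vector to a vector field via the parametrization and project the ordinary derivative onto $N_{A_0}$. The rotational directions are the ones that require care: the relevant computation is that of the Hessian of the map $g\mapsto gA_0g^t$ in two skew directions $\Omega_1,\Omega_2$, whose normal component involves terms like $\Omega_1\Omega_2 A_0 + A_0\Omega_2^t\Omega_1^t$ projected to block-diagonal-traceless matrices; for $U=V=X_{(k,l)}$ coming from a single generator $e_ae_b^t-e_be_a^t$ one gets a normal vector supported in the diagonal blocks $k$ and $l$, proportional to $(\mu_l-\mu_k)^2$ times $\big(e_a e_a^{t}\text{-part in block }k\big)-\big(e_b e_b^{t}\text{-part in block }l\big)$ after removing traces. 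The point of the whole proof is that when one sums $\mathrm{II}(U_i,U_i)$ over a full orthonormal basis, the contributions of all the generators feeding into a fixed diagonal block $k$ cancel: for block $k$ one receives, from each pair $(a,b)$ with $e_a$ in block $k$ and $e_b$ in block $l\ne k$, a contribution pushing $e_ae_a^t$ up; but summing over all $d_k$ choices of $a$ within block $k$ yields a multiple of $I_{d_k}$, which is orthogonal to the traceless part of block $k$, hence projects to zero. Likewise the spectral directions $E_k$ are ``straight lines'' in $\Sym_n$ (the parametrization is affine in the $\mu$'s), so $\nabla_{E_k}E_k=0$ and they contribute nothing. Carefully normalizing (each $X_{(k,l)}$ has squared norm $2(\mu_l-\mu_k)^2$, so dividing by the norm squared exactly kills the $(\mu_l-\mu_k)^2$ factor and leaves a contribution independent of the eigenvalue gaps) is what makes the cancellation clean and independent of $\vec\mu$.

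The main obstacle I anticipate is bookkeeping rather than conceptual: correctly identifying which second-order cross-terms survive the projection to the normal space, and verifying that after normalization the sum over an orthonormal basis of the rotational directions cancels block-by-block (and that spectral–rotational cross terms $\mathrm{II}(E_k,X_{(k,l)})$, if present, also vanish — they should, by a parity/index argument, since $E_k$ is diagonal and $X_{(k,l)}$ is off-diagonal, so their symmetrized product has no diagonal-block component). A secondary subtlety is handling blocks of size $1$ versus size $>1$ uniformly: for a size-$1$ block the traceless part is $\{0\}$, so there is literally no normal direction supported there and minimality in those slots is automatic; the genuine content is in blocks of size $\geq 2$, i.e. when some $\kappa_i>0$ for $i\geq 2$. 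I would therefore organize the proof so that the key identity reads: for each $k$, $\sum_{a\in\text{block }k}\sum_{l\ne k}\sum_{b\in\text{block }l}\mathrm{II}\big(\widehat X_{ab},\widehat X_{ab}\big)$ is a scalar multiple of $I_{d_k}$ inside block $k$ (with $\widehat X_{ab}$ the unit rotational vectors), hence has zero normal component, and conclude that the mean curvature vector of $\Sym_{n,\vec\kappa}$ at $A_0$ — and therefore everywhere by homogeneity — vanishes.
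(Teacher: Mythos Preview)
Your plan is correct and yields a genuinely different proof from the paper's. The paper does not carry out the direct computation you outline; instead it invokes a theorem of Hsiang and Lawson \cite{HL1971} stating that, for a compact group $G$ acting by isometries on a Riemannian manifold, each orbit-type stratum (the locus of points whose stabilizers lie in a fixed conjugacy class of subgroups of $G$) is automatically a minimal submanifold. Combined with Corollary~\ref{conj}, which identifies $\Sym_{n,\vec\kappa}$ as precisely such a stratum for the conjugation action \eqref{symaction} of $O(n)$ on $\Sym_n$, this gives Theorem~\ref{thm_sym} in one line with no computation. Your approach, by contrast, is parallel to the proofs of Theorems~\ref{thm_general} and~\ref{thm_skew}: it writes down the second fundamental form explicitly and shows that summing $e_a e_a^t$ over all $a$ in a fixed eigenblock produces a scalar multiple of the identity on that block, hence has zero traceless (normal) projection. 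Two small corrections to your write-up: first, $\Sym_{n,\vec\kappa}$ is not a single $O(n)$-orbit, so invariance only reduces you to checking at \emph{every} diagonal $A_0$ with the prescribed multiplicity pattern, not at one point---but your computation is for general $\mu_1,\dots,\mu_s$, so this is already covered; second, after dividing by $\lVert X_{ab}\rVert^2 = 2(\mu_l-\mu_k)^2$ the contribution is $(\mu_k-\mu_l)^{-1}(e_b e_b^t - e_a e_a^t)$, which still depends on the gap---harmless, since for each fixed $l$ the sum over $a$ in block $k$ is a multiple of $I_{d_k}$ and has zero normal component regardless of the scalar in front. The Hsiang--Lawson route is shorter and conceptual (orbit-type strata are always minimal), whereas your route is explicit, self-contained, and keeps the three proofs in the paper methodologically uniform.
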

We give a proof of Theorem \ref{thm_sym} in Subsection \ref{sub_sym}.
One can again consider the projective version $\mathbb{P}(\Sym_{n,\vec\kappa})\subset\mathbb{P}(\Sym_{n})$ consisting of $n\times n$ real symmetric matrices with $\kappa_i$ eigenvalues of multiplicity $i$, $i\geq 1$. The real projective space $\mathbb{P}(\Sym_{n})$ is endowed with the standard metric induced from \eqref{Frob}, see Subsection \ref{sub_minimality}. Theorem \ref{thm_sym} and Proposition \ref{conical} imply the following corollary.
\begin{corollary}\label{cor_sym}
  For any vector $\vec\kappa=(\kappa_1,\kappa_2,\dots)$ the smooth projective semialgebraic set $\mathbb{P}(\Sym_{n,\vec\kappa})$ is a minimal submanifold of $\mathbb{P}(\Sym_n)$.
\end{corollary}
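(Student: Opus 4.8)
The plan is to compute the second fundamental form of $\Sym_{n,\vec\kappa}$ in a spectral parametrization and to show that its trace vanishes, in the same spirit as the proofs of Theorems~\ref{thm_general} and~\ref{thm_skew}. Since the Frobenius inner product is invariant under the conjugation action $A\mapsto QAQ^{t}$ of $\mathrm{O}(n)$ on $\Sym_{n}$, and $\Sym_{n,\vec\kappa}$ is a union of orbits of this action, the mean curvature vector field (see Subsection~\ref{sub_mean}) is $\mathrm{O}(n)$-equivariant; hence it suffices to check that it vanishes at an arbitrary block-diagonal matrix $\Lambda=\mathrm{diag}(\mu_{1}I_{m_{1}},\dots,\mu_{s}I_{m_{s}})$, where $\mu_{1},\dots,\mu_{s}$ are pairwise distinct and the multiset $\{m_{1},\dots,m_{s}\}$ realizes $\vec\kappa$. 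Differentiating the parametrization $(Q,\mu_{1},\dots,\mu_{s})\mapsto Q\,\mathrm{diag}(\mu_{1}I_{m_{1}},\dots,\mu_{s}I_{m_{s}})\,Q^{t}$, one finds that $T_{\Lambda}\Sym_{n,\vec\kappa}$ is spanned by the pairwise orthogonal family
\[
\{D_{k}:=\textstyle\sum_{a\in B_{k}}E_{aa}\ :\ 1\le k\le s\}\ \cup\ \{\,E_{ij}+E_{ji}\ :\ i<j,\ k(i)\ne k(j)\,\},
\]
where $B_{k}$ is the $k$-th block of indices, $k(a)$ denotes the block containing $a$, the $D_{k}$ come from moving the eigenvalue $\mu_{k}$, and $E_{ij}+E_{ji}$ comes from the infinitesimal rotation $[E_{ji}-E_{ij},\Lambda]=(\mu_{k(i)}-\mu_{k(j)})(E_{ij}+E_{ji})$. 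The normal space $N_{\Lambda}\Sym_{n,\vec\kappa}$ is spanned by the $E_{ij}+E_{ji}$ with $i<j$ in the same block together with the within-block traceless diagonal matrices; equivalently, the orthogonal projection onto the diagonal part of $N_{\Lambda}$ sends $E_{aa}$ to $P_{a}:=E_{aa}-\tfrac{1}{m_{k(a)}}D_{k(a)}$, and $\sum_{a\in B_{k}}P_{a}=0$ for every $k$.

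Next I would compute the second fundamental form $\mathrm{II}$ on the resulting orthonormal basis $\{D_{k}/\sqrt{m_{k}}\}\cup\{\widehat{S}_{ij}\}$, where $\widehat{S}_{ij}:=(E_{ij}+E_{ji})/\sqrt2$. The eigenvalue directions are flat: for each $k$ the affine segment $t\mapsto\Lambda+tD_{k}$ lies in $\Sym_{n,\vec\kappa}$ for small $|t|$, since it only perturbs $\mu_{k}$ and the eigenvalues stay distinct, so $\mathrm{II}(D_{k},D_{k})=0$. For $i<j$ with $k(i)\ne k(j)$ I would use the conjugation curve $\gamma_{ij}(t)=R_{ij}(t)\,\Lambda\,R_{ij}(t)^{t}$, where $R_{ij}(t)$ is the rotation by angle $t$ in the $(i,j)$-plane: it stays in $\Sym_{n,\vec\kappa}$ because conjugation preserves the spectrum, it is supported (apart from $\Lambda$) on the $2\times2$ submatrix indexed by $i,j$, and a direct computation gives $\dot\gamma_{ij}(0)=(\mu_{k(i)}-\mu_{k(j)})(E_{ij}+E_{ji})$ and $\ddot\gamma_{ij}(0)=2(\mu_{k(j)}-\mu_{k(i)})(E_{ii}-E_{jj})$. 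Projecting $\ddot\gamma_{ij}(0)$ onto $N_{\Lambda}$ (that is, replacing $E_{aa}$ by $P_{a}$) and using bilinearity of $\mathrm{II}$ to pass from $\dot\gamma_{ij}(0)$ to the unit vector $\widehat{S}_{ij}$ yields
\[
\mathrm{II}(\widehat{S}_{ij},\widehat{S}_{ij})=\frac{-1}{\mu_{k(i)}-\mu_{k(j)}}\,(P_{i}-P_{j}).
\]

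Summing $\mathrm{II}$ over the orthonormal basis, the $D_{k}$ contribute nothing and, using that the last summand is symmetric under $i\leftrightarrow j$,
\[
H=\sum_{\substack{i<j\\ k(i)\ne k(j)}}\frac{-(P_{i}-P_{j})}{\mu_{k(i)}-\mu_{k(j)}}=-\sum_{\substack{i\ne j\\ k(i)\ne k(j)}}\frac{P_{i}}{\mu_{k(i)}-\mu_{k(j)}}=-\sum_{k=1}^{s}\Bigl(\sum_{l\ne k}\frac{m_{l}}{\mu_{k}-\mu_{l}}\Bigr)\sum_{a\in B_{k}}P_{a}=0,
\]
the last step by the block identity $\sum_{a\in B_{k}}P_{a}=0$. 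Hence the mean curvature vector of $\Sym_{n,\vec\kappa}$ vanishes, which is Theorem~\ref{thm_sym}; Corollary~\ref{cor_sym} then follows from Proposition~\ref{conical}, since $\Sym_{n,\vec\kappa}$ is invariant under scaling. I expect the only genuine obstacle to be this final summation: each individual term $\mathrm{II}(\widehat{S}_{ij},\widehat{S}_{ij})$ is nonzero, and what forces the cancellation is that, after regrouping by blocks, the scalar weight $\sum_{l\ne k}m_{l}/(\mu_{k}-\mu_{l})$ attached to block $k$ does not depend on the particular index $i\in B_{k}$, so it multiplies $\sum_{a\in B_{k}}P_{a}=0$. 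The description of the tangent and normal spaces, the curve computations, and the passage through the curvature formalism of Subsection~\ref{sub_mean} are all routine.
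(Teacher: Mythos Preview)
Your argument is correct, but it takes a genuinely different route from the paper. The paper proves Theorem~\ref{thm_sym} by invoking an abstract result of Hsiang and Lawson \cite{HL1971}: for a compact group acting by isometries on a Riemannian manifold, the orbit-type strata (equivalence classes under conjugacy of stabilizers) are automatically minimal submanifolds. Corollary~\ref{conj} identifies $\Sym_{n,\vec\kappa}$ as exactly such a stratum for the conjugation action of $O(n)$ on $\Sym_n$, so minimality follows with no computation at all; Corollary~\ref{cor_sym} is then obtained from Proposition~\ref{conical}. You instead carry out a direct second-fundamental-form computation in the spectral parametrization, parallel to the paper's own proofs of Theorems~\ref{thm_general} and~\ref{thm_skew}. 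Your approach is more elementary and entirely self-contained---it does not need the Hsiang--Lawson black box---and it makes the mechanism of cancellation completely explicit: after regrouping by blocks, the scalar weight $\sum_{l\ne k} m_l/(\mu_k-\mu_l)$ is constant over $B_k$ and therefore multiplies $\sum_{a\in B_k}P_a=0$. The paper's approach, by contrast, is shorter and more conceptual, explaining \emph{a priori} why minimality should hold (it is a general phenomenon for isometric group actions) rather than verifying it by hand. Both routes reach Corollary~\ref{cor_sym} through Proposition~\ref{conical} in the same way.
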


\section{Preliminaries and auxiliary results}\label{sec:prel}

In this section we state some facts and results that we then use in Section \ref{sec:proofs} to prove our main results.
\subsection{Singular value decomposition}
Let $O(n) = \{V\in \M_{n,n}: V^\mathsf{T}V=\mathrm{id}\}$ denote the group of orthogonal $n\times n$ matrices. The standard action of the product $O(m)\times O(n)$ of orthogonal groups on $\M_{m,n}$,
\begin{align}\label{action}
  (U,V)\in O(m)\times O(n),\  A\in \M_{m,n}\ \mapsto\ UAV^\mathsf{T},
\end{align}
preserves the Frobenius inner product \eqref{Frob}, that is, for $(U,V)\in O(m)\times O(n)$
\begin{align*}
\langle UAV^\mathsf{T}, UBV^\mathsf{T}\rangle =\mathrm{Tr}(VA^\mathsf{T}U^\mathsf{T}UBV^\mathsf{T})= \mathrm{Tr}(A^\mathsf{T}B) =  \langle A,B\rangle,\quad A, B\in \M_{m,n}.
\end{align*}
Moreover, the action \eqref{action} obviously preserves the rank of a matrix and hence the manifold $\M_{m,n,r}$ is invariant under $O(m)\times O(n)$ for any $r\leq m\leq n$. \emph{The singular value decomposition} (in the following SVD) of a matrix $A\in \M_{m,n}$ is a factorization
\begin{align}\label{svd}
  A=U^\mathsf{T}\Sigma V,
\end{align}
where $U\in O(m)$, $V\in O(n)$ and
\begin{align*}
  \Sigma =
  \begin{pmatrix}[cccccc]
    \sigma_1 &          & \text{\large 0} & &               &   \\
             &  \ddots  &                 & &\text{\Huge 0} &   \\
             \text{\large 0}   &    & \sigma_m & & & 
  \end{pmatrix}
\end{align*}
is the ``diagonal'' matrix of \emph{singular values} $\sigma_1,\dots,\sigma_m\geq 0$ of $A$. Note that the number of non-zero singular values equals the rank of $A$ and we can, without loss of generality, assume that they are ordered, $\sigma_1\geq \dots\geq \sigma_m\geq 0$.

In Subsection \ref{sub_general} we use the singular value decomposition \eqref{svd} of a matrix in order to design a local parametrization of $\M_{m,n,r}$ suitable for computing its mean curvature.

\subsection{Normal form of a skew-symmetric matrix}

Consider the following diagonal subaction of the action \eqref{action} on the space $\Sk_n$ of $n\times n$ real skew-symmetric matrices
\begin{align}\label{skaction}
  V\in O(n),\ A\in \Sk_n\ \mapsto\ VAV^\mathsf{T}. 
\end{align}
Any $A\in \Sk_n$ can be written in \emph{the normal form} \cite[Thm. $2.5$]{Thompson}
\begin{align}\label{normalform}
  A=V^\mathsf{T}\Omega V,
\end{align}
where $V\in O(n)$ and 
\begin{align}\label{omega}
 \Omega =  \begin{pmatrix}
    0         & \omega_1 &         &        &          &        &        & \\
    -\omega_1 & 0        &         &        &          &         \text{\Huge 0} &         &  \\
             &           & \ddots &          &           &         &      & \\
              &         &         & 0        &  \omega_r &         &        & \\
              &         &        & -\omega_r & 0     &         &        & \\
              &         \text{\Huge 0}&         &           &       &  0      &        & \\
              &         &         &           &       &         &\ddots  & \\
              &         &        &           &       &         &        & 0
  \end{pmatrix},
\end{align}
where $\pm i\, \omega_1,\dots, \pm i\,\omega_r\in i\cdot \R$ are non-zero eigenvalues of $A$ and, in particular, $2r$ is the rank of $A$. We can, moreover, assume that $\omega_1\geq \dots \geq \omega_r>0$ are positive and ordered.

In Subsection \ref{sub_skew} we use the normal form \eqref{normalform} of a skew-symmetric matrix in order to design a local parametrization of $\Sk_{n,2r}$ suitable for computing its mean curvature.

\subsection{Spectral decomposition of a symmetric matrix}

Consider the following diagonal subaction of the action \eqref{action} on the space $\Sym_n$ of $n\times n$ real symmetric matrices
\begin{align}\label{symaction}
  V\in O(n),\ A\in \Sym_n\ \mapsto\ VAV^\mathsf{T}. 
\end{align}
\emph{The spectral decomposition of a matrix $A\in \Sym_n$} is a factorization
\begin{align}\label{specdec}
  A=V^\mathsf{T}\Lambda V,
\end{align}
where $V\in O(n)$ and 
  \begin{equation*}
\Lambda =   \begin{pmatrix}
  \lambda_1 & &\textrm{\Large 0} \\
  & \ddots &  \\
 \textrm{\Large 0} & & \lambda_n
\end{pmatrix}
\end{equation*}
is the diagonal matrix of \emph{eigenvalues} $\lambda_1,\dots\lambda_n\in \R$ of $A$. Note that singular values of $A$ are related to its eigenvalues via $\sigma_i=\vert\lambda_i\vert$, $i=1,\dots,n$.

Semialgebraic manifolds $\Sym_{n,\vec\kappa}$ are invariant under the action \eqref{symaction}. It is not difficult to show that the membership of $A\in \Sym_n$ in a certain $\Sym_{n,\kappa}$ is determined by the conjugacy class of its stabilizer under \eqref{symaction}. To prove this, we need the following lemma.
\begin{lemma}\label{stab}
  Let $A\in \Sym_{n,\vec\kappa}$. The stabilizer $O(n)_A$ of $A$ under the action \eqref{symaction} equals
  \begin{align*}
    O(n)_A = V^\mathsf{T} O_{\vec\kappa} V,
  \end{align*}
  where $V\in O(n)$ and
  \begin{align*}
    O_{\vec\kappa} = \bigtimes\limits_{i\geq 1} O(i)^{\,\kappa_i} = \bigtimes_{i\geq 1} \underbrace{O(i)\times\dots\times O(i)}_{\kappa_i\ \textrm{times}}
  \end{align*}
  is the direct product of orthogonal groups of sizes encoded by the partition $\vec\kappa$.
\end{lemma}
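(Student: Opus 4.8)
The plan is to prove Lemma~\ref{stab} by reducing to the case of a diagonal matrix and then analyzing commutation with a diagonal matrix directly. Since the action \eqref{symaction} is by conjugation and $\Sym_{n,\vec\kappa}$ is invariant under it, conjugating by any $V\in O(n)$ transports stabilizers: $O(n)_{V^\mathsf{T}AV} = V^\mathsf{T} O(n)_A V$. Hence, using the spectral decomposition \eqref{specdec} to write $A = V^\mathsf{T}\Lambda V$ with $\Lambda$ diagonal, it suffices to show that $O(n)_\Lambda = O_{\vec\kappa}$ after an appropriate ordering of the eigenvalues. So the first step is to fix, once and for all, an ordering of the entries of $\Lambda$ in which equal eigenvalues are grouped into consecutive blocks, the block sizes being exactly the multiplicities $1,2,\dots$ appearing $\kappa_1,\kappa_2,\dots$ times; after this normalization $\Lambda$ has the block form $\Lambda = \bigoplus_\ell \mu_\ell\,\mathrm{id}_{d_\ell}$ with distinct scalars $\mu_\ell$ and block sizes $d_\ell$ matching the partition $\vec\kappa$.

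The second step is the core computation: describe all $W\in O(n)$ with $W\Lambda = \Lambda W$. Writing $W$ in the block structure induced by the eigenspaces of $\Lambda$, the relation $W\Lambda = \Lambda W$ forces the $(\ell,\ell')$ block of $W$ to vanish whenever $\mu_\ell \neq \mu_{\ell'}$, since that block is multiplied by $\mu_{\ell'}$ on one side and $\mu_\ell$ on the other. Thus $W$ is block-diagonal with respect to the eigenspace decomposition. The orthogonality condition $W^\mathsf{T}W = \mathrm{id}$ then decouples into the condition that each diagonal block is itself orthogonal, i.e.\ lies in $O(d_\ell)$. This shows $O(n)_\Lambda = \bigtimes_\ell O(d_\ell) = O_{\vec\kappa}$, where the last identification just reorders the factors according to which block sizes coincide and collects $\kappa_i$ copies of $O(i)$.

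Combining the two steps: for general $A\in\Sym_{n,\vec\kappa}$ pick $V\in O(n)$ (incorporating the reordering permutation) so that $VAV^\mathsf{T} = \Lambda$ is in the normalized block-diagonal form above; then
\begin{align*}
O(n)_A = V^\mathsf{T}\,O(n)_\Lambda\,V = V^\mathsf{T}\,O_{\vec\kappa}\,V,
\end{align*}
which is the claim. Strictly one should also note $V^\mathsf{T} O_{\vec\kappa} V$ does not depend on the particular choice of $V$ beyond its conjugacy effect, but that is automatic from the statement since only the conjugacy class is asserted.

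I do not expect a genuine obstacle here; the lemma is essentially the standard description of the centralizer of a symmetric matrix inside the orthogonal group. The only point requiring a little care is bookkeeping: making sure the reordering of eigenvalues is done consistently so that the product $\bigtimes_{i\ge1}O(i)^{\kappa_i}$ comes out with exactly the right factors, and observing that the action-invariance of $\Sym_{n,\vec\kappa}$ (already recorded in the text) legitimizes passing between $A$ and $\Lambda$. A secondary subtlety, if one wants to be thorough, is that the multiplicity data is encoded in $\vec\kappa$ via the partition $1\kappa_1 + 2\kappa_2 + \cdots = n$, so one should remark that the eigenspace dimensions $d_\ell$ are precisely the parts of this partition — but this is immediate from the definition of multiplicity given just before the lemma.
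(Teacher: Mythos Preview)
Your proposal is correct and follows essentially the same route as the paper's proof: diagonalize $A$ via the spectral decomposition \eqref{specdec}, normalize the order of eigenvalues so that equal ones are grouped into consecutive blocks, and then observe that $W\Lambda=\Lambda W$ forces $W$ to be block-diagonal with orthogonal blocks, yielding $O(n)_\Lambda=O_{\vec\kappa}$ and hence $O(n)_A=V^\mathsf{T}O_{\vec\kappa}V$. The paper carries out the commutation argument entry-wise ($W_{ij}\Lambda_{jj}=\Lambda_{ii}W_{ij}$) rather than block-wise, but this is a purely cosmetic difference.
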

\begin{proof}
  By \eqref{specdec} we can write $A=V^\mathsf{T}\Lambda V$ for some $V\in O(n)$ and diagonal matrix $\Lambda$ of eigenvalues. Since $A\in \Sym_{n,\vec\kappa}$ there are $\vert\vec\kappa\vert = \kappa_1+\kappa_2+\dots$ pairwise distinct eigenvalues that we denote by $\lambda^{(i)}_j$, $i\geq 1$, $j=1,\dots, \kappa_i$. After a possible permutation of the entries of $\Lambda$, we can assume that it is of the form
  
  \begin{equation*}
    \Lambda =
    \begin{pmatrix}
      \ddots & &\coolover{i}{\vspace{0.2cm}\phantom{\lambda_j} & \phantom{\ddots} & \phantom{\lambda_j}} &  \\
      &      & \lambda_j^{(i)} & &\textrm{\huge 0} & \\
      &      &                &\ddots & & \\
      &      &  \textrm{\huge 0}              &       & \lambda^{(i)}_j &\\
      &      &                &       &        & \ddots
    \end{pmatrix}
    \coolrightbrace{i}{\\ \\ \\}\, ,
  \end{equation*}
  namely the diagonal entries of $\Lambda$ are $\lambda^{(1)}_1,\dots,\lambda^{(1)}_{\kappa_1},\lambda^{(2)}_{1},\lambda^{(2)}_1, \dots, \lambda^{(2)}_{\kappa_2},\lambda^{(2)}_{\kappa_2},\lambda^{(3)}_1,\lambda^{(3)}_1,\lambda^{(3)}_1,\dots$.

Let $U\in O(n)_A$ be an orthogonal matrix that fixes $A$, that is, $UV^\mathsf{T}\Lambda VU^\mathsf{T} = UAU^\mathsf{T} = A = V^\mathsf{T}\Lambda V$ or, equivalently, $VUV^\mathsf{T} \Lambda = \Lambda VUV^\mathsf{T}$. Denoting $W=VUV^\mathsf{T}$ and taking the $(i,j)$th entry of $W\Lambda=\Lambda W$ we obtain
\begin{align*}
  W_{ij}\Lambda_{jj} = \Lambda_{ii}W_{ij}.
\end{align*}
Thus, if $\Lambda_{ii}\neq \Lambda_{jj}$, then $W_{ij}=0$. Due to the block-diagonal structure of $\Lambda$ with blocks being scalar matrices $\lambda_j^{(i)} \mathrm{Id}_{\,i\times i}$ with pairwise distinct $\lambda_j^{(i)}$, the orthogonal matrix $W\in O(n)$ is block-diagonal with the same block structure as in $\Lambda$. The condition $W^\mathsf{T}W=\mathrm{Id}$ implies that each $i\times i$ block is a (small) orthogonal matrix. Therefore, $W\in O_{\vec\kappa}= \bigtimes_{\,i\geq 1} O(i)^{\,\kappa_i}$ and $U=V^\mathsf{T}WV \in V^\mathsf{T} O_{\vec\kappa}V$. Conversely, any $U\in V^\mathsf{T} O_{\vec\kappa}V$ obviously fixes $A\in \Sym_{n,\vec\kappa}$ written in the above form.
\end{proof}
\begin{corollary}\label{conj}
A real symmetric matrix $A\in \Sym_n$ is in $\Sym_{n,\vec\kappa}$ if and only if its stablizer $O(n)_A$ under the action \eqref{symaction} is conjugate to $O_{\vec\kappa}=\bigtimes_{i\geq 1} O(i)^{\,\kappa_i}$.  
\end{corollary}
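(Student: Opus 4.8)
The plan is to deduce the corollary directly from Lemma~\ref{stab}. The forward implication requires no work: if $A\in\Sym_{n,\vec\kappa}$ then Lemma~\ref{stab} gives $O(n)_A=V^\mathsf{T}O_{\vec\kappa}V$ for some $V\in O(n)$, so $O(n)_A$ is conjugate to $O_{\vec\kappa}$ by definition.

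For the converse I would argue as follows. Recall from \cite{Arnold1972}, as noted above, that the sets $\Sym_{n,\vec\kappa'}$ partition $\Sym_n$, so the given matrix $A$ lies in a unique stratum $\Sym_{n,\vec\kappa'}$. Lemma~\ref{stab} applied to that stratum shows $O(n)_A$ is conjugate to $O_{\vec\kappa'}$; together with the hypothesis that $O(n)_A$ is conjugate to $O_{\vec\kappa}$, this makes $O_{\vec\kappa}$ and $O_{\vec\kappa'}$ conjugate subgroups of $O(n)$. Hence it suffices to prove that the assignment $\vec\kappa\mapsto O_{\vec\kappa}$ is injective up to $O(n)$-conjugacy, for then $\vec\kappa'=\vec\kappa$ and $A\in\Sym_{n,\vec\kappa}$, as wanted.

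To prove this injectivity I would use the tautological action of $O_{\vec\kappa}=\bigtimes_{i\geq1}O(i)^{\kappa_i}$ on $\R^n$ and its decomposition $\R^n=\bigoplus_{i\geq1}\bigoplus_{j=1}^{\kappa_i}W_{i,j}$ into $O_{\vec\kappa}$-submodules, where $W_{i,j}\cong\R^i$ carries the standard representation of the $j$-th copy of $O(i)$ and the trivial representation of all other factors. Since the standard representation of $O(i)$ on $\R^i$ is irreducible over $\R$ for every $i\geq1$, each $W_{i,j}$ is $O_{\vec\kappa}$-irreducible, and inspecting the action of single factors shows the $W_{i,j}$ are pairwise non-isomorphic; thus the multiset of their dimensions, namely $\{1^{\kappa_1},2^{\kappa_2},\dots\}$, is an invariant of the $O_{\vec\kappa}$-module $\R^n$ and visibly recovers $\vec\kappa$. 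A conjugating element $g\in O(n)$ with $gO_{\vec\kappa}g^{-1}=O_{\vec\kappa'}$ intertwines the standard $O_{\vec\kappa}$-module with the standard $O_{\vec\kappa'}$-module through $v\mapsto gv$, so that multiset is the same for $\vec\kappa$ and $\vec\kappa'$, forcing $\vec\kappa=\vec\kappa'$. I expect this representation-theoretic injectivity to be the only real obstacle; the rest is just combining Lemma~\ref{stab} with the known stratification of $\Sym_n$.
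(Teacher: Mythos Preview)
Your argument is correct. The paper does not give a separate proof of this corollary; it is stated immediately after Lemma~\ref{stab} as a direct consequence, so your proposal supplies details the paper leaves implicit. The forward implication and your reduction of the converse to the injectivity of $\vec\kappa\mapsto O_{\vec\kappa}$ up to $O(n)$-conjugacy are exactly how one unpacks the claim from Lemma~\ref{stab} together with the stratification $\Sym_n=\bigsqcup_{\vec\kappa}\Sym_{n,\vec\kappa}$.

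The only substantive step beyond the paper is your verification that conjugate $O_{\vec\kappa}$ and $O_{\vec\kappa'}$ force $\vec\kappa=\vec\kappa'$, and your representation-theoretic argument handles this cleanly: the tautological $O_{\vec\kappa}$-module $\R^n$ splits into the irreducibles $W_{i,j}$ you describe, and a conjugating $g\in O(n)$ transports this decomposition to the corresponding one for $O_{\vec\kappa'}$, so the multiset of irreducible dimensions---which is exactly the partition encoded by $\vec\kappa$---must agree. (Strictly speaking you do not need the $W_{i,j}$ to be pairwise non-isomorphic, only that the decomposition into irreducibles is unique up to isomorphism, which holds for real representations of compact groups; but the observation does no harm.) This is a natural and efficient way to close the gap.
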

This characterization of $\Sym_{n,\vec\kappa}$ is used in the proof of Theorem \ref{thm_sym} in Subsection \ref{sub_sym}.

\subsection{Mean curvature of a submanifold of a Riemannian manifold}\label{sub_mean}
In this subsection we very briefly recall a definition of the mean curvature vector field of a submanifold of a Riemannian manifold, see \cite[Ch. $\text{\MakeUppercase{\romannumeral 7}}$]{KN1969} for more details.

Let $(M,\bm g)$ be a Riemannian manifold and let $X\subset M$ be its smooth $n$-dimensional submanifold. \emph{The second fundamental form of $X\subset M$} is a symmetric bilinear form $\bm b$ on the tangent bundle $TX$ to $X$ with values in the normal bundle $(TX)^\perp$ to $X$ defined at each point $p\in X$ by
\begin{align*}
  \bm b: T_{p\,}X\times T_{p\,}X &\ \rightarrow\ (T_{p\,}X)^\perp,\\
       \xi, \eta\hspace{0.7cm} &\ \mapsto\ (\nabla_\xi \,\vec{\eta}\,)^{\perp},
\end{align*}
where $\vec{\eta}$ is a smooth vector field defined on some neighborhood $U\subset M$ of $p$ and such that $\vec{\eta}_{\,p}=\eta$, $\nabla_\xi\,\vec{\eta}\in~T_{p\,}M$ is the Levi-Civita covariant derivative of $\vec{\eta}$ along the vector $\xi\in T_{p\,}X$ and $(\nabla_\xi\,\vec{\eta}\,)^\perp\in~(T_{p\,}X)^\perp$ is \emph{the normal component of $\nabla_\xi\,\vec{\eta} \in T_{p\,}M$}. The result $(\nabla_\xi\, \vec{\eta}\,)^\perp$ is independent of the choice of a vector field $\vec{\eta}$ that extends $\eta\in T_{p\,}X$. Moreover, if $\vec{\xi}$ and $\vec{\eta}$ are smooth vector fields tangent to $X$ along some open set $U\subset X$, the dependence
\begin{align*}
  p\in U\ \mapsto\  \bm b(\vec{\xi}_{\,p},\vec{\eta}_{\,p})\in (T_{p\,}X)^\perp
\end{align*}
is a smooth field of normal vectors to $X$.

Let $\vec{e}=\{\vec{e}^{\;1},\dots, \vec{e}^{\;n}\}$ be a \emph{local frame on $X$}, that is, $\vec{e}^{\;1},\dots,\vec{e}^{\;n}$ are smooth vector fields tangent to $X$ along some open set $U\subset X$ and such that for each $p\in U$ vectors $\vec{e}^{\;1}_p,\dots, \vec{e}^{\;n}_p\in T_{p\,}X$ form a basis of the tangent space $T_{p\,}X$. Let $\vec{G}=(\bm g(\vec{e}^{\;i},\vec{e}^{\;j}))$ be the smooth field of matrices of the metric $\bm g$ written in the local frame $\vec{e}$ and let $\vec{G}^{-1}$ be the smooth field of inverses of $\vec{G}$, that is, $\vec{G}^{\,-1}_p = (\bm g(\vec{e}_p^{\; i},\vec{e}_p^{\;j}))^{-1}$ for $p\in U$. Then \emph{the mean curvature vector field of $X$ along $U$} is defined by
\begin{align}\label{mean_loc}
  H\vert_U = \sum_{i,j=1}^n(\vec{G}^{\,-1})_{ij}\, \bm b( \vec{e}^{\; i},\vec{e}^{\;j}),
\end{align}
where $\bm b(\vec{e}^{\; i},\vec{e}^{\;j})$ is the field of normal vectors $\bm b(\vec{e}_p^{\; i},\vec{e}_p^{\;j})\in (T_{p\,}X)^\perp$, $p\in U$, and \emph{the mean curvature vector of $X$ at a point $p\in U$} is given by
\begin{align*}
  H_{p\,} = \sum_{i, j=1}^n (\vec{G}^{\,-1}_p)_{ij} \,\bm b( \vec{e}_{p}^{\; i},\vec{e}_p^{\;j}) \in (T_{p\,}X)^\perp.
\end{align*}
The definition \eqref{mean_loc} of $H\vert_U$ is independent of the choice of a local frame on $U$. By gluing mean curvature vector fields \eqref{mean_loc} along open sets from an open cover of $X$ we obtain the smooth field $H$ of normal vectors to $X$, called \emph{the mean curvature vector field of $X$}. A submanifold $X\subset M$ is called \emph{minimal} if its mean curvature vector field is zero.
\begin{remark}
  Usually one defines the mean curvature vector field of an $n$-dimensional submanifold $X\subset M$ as $\frac{1}{n} H$, where $H$ is defined above. However, since we are interested in minimal submanifolds, that is, when $H=0$, the factor of $\frac{1}{n}$ is unessential for us.  
\end{remark}
\begin{example}
  If $X\subset (\R^N,\langle \cdot,\cdot\rangle)$ is a submanifold of the Euclidean space, the covariant derivative $\nabla$ coincides with the directional derivative of components of a vector field along a vector and, in particular, the second fundamental form of $X$ computed on two vectors $\xi=(\xi^{\,1},\dots,\xi^{\,N})$, $\eta=(\eta^{\,1},\dots,\eta^{\,N})\in T_{p\,}X\subset \R^N$ equals
\begin{align}\label{b}
  \bm b(\xi,\eta)  = \left(\sum_{i=1}^N\xi^{\,i} \partial_{x_i}\vec{\eta}^{\,1},\dots,\sum_{i=1}^N\xi^{\,i}\partial_{x_i}\vec{\eta}^{\,N}\right)^\perp \in (T_{p\,}X)^\perp \subset \R^N,
\end{align}
where $\vec{\eta} = (\vec{\eta}^{\,1},\dots,\vec{\eta}^{\,N})$ is a smooth vector field defined in a neighborhood of $p$ and such that $\vec\eta_{\,p}=\eta\in T_{p\,}X$.
Consider \emph{a local parametrization of $X$}, that is, a smooth map
\begin{align}\label{r}
  r: U\subset \R^n\ \rightarrow\ X
\end{align}
from some open set $U\subset \R^n$ to $X$ such that the vector fields $\partial_{u_1}r,\dots, \partial_{u_n}r$ form a local frame on $X$ along $U$. It follows from \eqref{b} that
\begin{align}\label{b_eucl}
\bm b(\partial_{u_i}r,\partial_{u_j}r) = \left(\partial_{u_i}\partial_{u_j} r\right)^\perp,\quad i, j=1,\dots, n. 
\end{align}
In particular, the mean curvature vector of $X$ at $r(u)\in X$ is computed as
\begin{align}\label{mean}
H_{r(u)} = \sum_{i,j=1}^n (\vec{G}_{r(u)}^{\,-1})_{ij} (\partial_{u_i}\partial_{u_j} r(u))^\perp \in (T_{r(u)}X)^\perp, 
\end{align}
where $\vec{G}^{\,-1}_{r(u)}$ is the inverse of the matrix $\vec{G}_{r(u)} = (\langle \partial_{u_i}r(u),\partial_{u_j}r(u)\rangle)$ of the metric written in the basis $\partial_{u_1}r(u),\dots, \partial_{u_n}r(u)\in T_{r(u)}X$. Denoting by $\textrm{d}^2r(u)=(\partial_{u_i}\partial_{u_j} r(u))$ the $n\times n$ matrix of second partial derivatives of the local parametrization \eqref{r}, we formally write 
\begin{align}\label{mean2}
  H_{r(u)} = \textrm{Tr}\left[\vec{G}_{r(u)}^{\,-1} \,(\textrm{d}^2 r(u))^\perp\right],
\end{align}
where $(\textrm{d}^2r(u))^\perp=((\partial_{u_i}\partial_{u_j}r(u))^\perp)$.

If $X\subset M\subset \R^N$, expressions \eqref{b_eucl}, \eqref{mean} and \eqref{mean2} are valid with a slight change: additionally, one needs to project vectors $\left(\partial_{u_i}\partial_{u_j}r(u)\right)^\perp$ to the tangent space $T_{r(u)}M$.
\end{example}
In our computation of mean curvature vectors of $\M_{m,n,r}$ and $\Sk_{n,2r}$ in Subsections \ref{sub_general} and \ref{sub_skew} it is more convenient to use the formal form \eqref{mean2} of the expression \eqref{mean}. 

\subsection{Minimality in Euclidean and projective spaces}\label{sub_minimality}

In this subsection we give a proof of the folklore fact that a conic submanifold of an Euclidean space is minimal if and only if its intersection with a sphere is a minimal submanifold of the sphere if and only if its projectivization is a minimal submanifold of the real projective space.

Let $(\R^N,\langle\cdot,\cdot\rangle)$ be an Euclidean space and denote by $S^{N-1}=\{p\in \R^N:  \langle p,p\rangle=1\}$ \emph{the unit sphere} in $\R^N$ endowed with the induced metric. Let $\mathbb{P}(\R^N)$ be \emph{the projective $(N-1)$-space}. \emph{The standard metric on $\mathbb{P}(\R^N)$} is the push-forward metric under the double covering map $S^{N-1}\rightarrow \mathbb{P}(\R^N)$ that sends $p\in S^{N-1}$ to the line through $p$ and $-p$.

A smooth submanifold $X\subset \R^N$ is said to be \emph{conic} if $tp\in P$ for any $p\in X$ and any $t\in \R\setminus\{0\}$. Given a conic submanifold $X\subset \R^N$, its projectivization $\mathbb{P}(X)\subset \mathbb{P}(\R^N)$ is a smooth submanifold of the projective space $\mathbb{P}(\R^N)$.

\begin{example}
The manifold $\M_{m,n,r}$ of $m\times n$ real matrices of rank $r$, the manifold $\Sk_{n,2r}$ of $n\times n$ real skew-symmetric matrices of rank $2r$ and the manifold $\Sym_{n,\vec{\kappa}}$ of $n\times n$ real symmetric matrices with exactly $\kappa_i$ eigenvalues of multiplicity $i$, where $\vec{\kappa}=(\kappa_1,\kappa_2,\dots)$, are conic submanifolds of $\M_{m,n}$, $\Sk_n$ and $\Sym_{n}$ respectively.
\end{example}

The following fact is well-known, but we anyway include a proof of it.
\begin{proposition}\label{conical}
    Let $X\subset \R^N$ be a conic submanifold. Then $X$ is minimal in $\R^N$ if and only if $X\cap S^{N-1}$ is minimal in $S^{N-1}$ if and only if $\mathbb{P}(X)$ is minimal in $\mathbb{P}(\R^N)$.
\end{proposition}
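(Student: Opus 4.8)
The plan is to prove the two equivalences in the statement separately, using the ``middle'' object $Y:=X\cap S^{N-1}$ as a pivot. First I would record that $Y$ is a smooth submanifold of $S^{N-1}$: applying the regular value theorem to $f\colon X\to\R$, $f(p)=\langle p,p\rangle$, whose differential satisfies $df_p(p)=2\langle p,p\rangle$ (the radial vector $p$ is tangent to $X$ at $p$ because the curve $t\mapsto tp$ lies in the conic manifold $X$ near $t=1$), we see that $1$ is a regular value, so $Y=f^{-1}(1)$ is a submanifold of $X$, hence of $\R^N$, hence of $S^{N-1}$. The equivalence ``$Y$ minimal in $S^{N-1}$ $\iff$ $\mathbb{P}(X)$ minimal in $\mathbb{P}(\R^N)$'' is then soft: the double covering $\pi\colon S^{N-1}\to\mathbb{P}(\R^N)$ is a local isometry for the standard metric, it maps $Y$ onto $\mathbb{P}(X)$, and it restricts to a surjective local diffeomorphism $Y\to\mathbb{P}(X)$ ($Y$ being antipodally symmetric since $X$ is conic). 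As the mean curvature vector field is a local isometry invariant, $d\pi$ carries the mean curvature field of $Y$ to that of $\mathbb{P}(X)$ pointwise, so one vanishes identically if and only if the other does.

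The heart of the matter is ``$X$ minimal in $\R^N$ $\iff$ $Y$ minimal in $S^{N-1}$'', which I would establish by a direct computation with the cone parametrization. The antipodal map $p\mapsto-p$ is an isometry of $\R^N$ preserving $X$, so it is enough to work near points $t\,y$ with $t>0$, $y\in Y$. Suppose $X$ has dimension $n$, so $Y$ has dimension $n-1$; given a local parametrization $y\colon V\subset\R^{n-1}\to Y$, the map $r(t,u)=t\,y(u)$ on $\R_{>0}\times V$ is a local parametrization of $X$, and $\partial_t r=y$, $\partial_{u_i}r=t\,\partial_{u_i}y$ form a local frame (they are linearly independent since $y\perp T_yY$ and $y\neq0$). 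From $\langle y,y\rangle\equiv1$ we get $\langle y,\partial_{u_i}y\rangle\equiv0$, so the Gram matrix is block diagonal, $\vec{G}_{r(t,u)}=\mathrm{diag}(1,\ t^{2}\vec{G}^{\,Y}_{y(u)})$, where $\vec{G}^{\,Y}$ is the metric matrix of $Y$ in the frame $\{\partial_{u_i}y\}$. The second derivatives are $\partial_t^2r=0$, $\partial_t\partial_{u_i}r=\partial_{u_i}y$ and $\partial_{u_i}\partial_{u_j}r=t\,\partial_{u_i}\partial_{u_j}y$. Substituting into \eqref{mean}: the $(t,t)$ term vanishes because $\partial_t^2r=0$, and the mixed $(t,u_i)$ terms vanish because $\vec{G}^{-1}$ is block diagonal, leaving
\[
H^{X}_{r(t,u)}=t^{-1}\sum_{i,j}(\vec{G}^{\,Y}_{y(u)})^{-1}_{ij}\,(\partial_{u_i}\partial_{u_j}y(u))^{\perp},
\]
where the projection is onto the normal space of $X$ in $\R^N$ at $t\,y(u)$.

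It remains to identify this projection with the second fundamental form of $Y\subset S^{N-1}$. Since $X$ is conic, $T_{t y}X=\R y\oplus T_yY$, so its orthogonal complement in $\R^N$ is $y^{\perp}\cap(T_yY)^{\perp}$, which is exactly the normal space of $Y$ in $S^{N-1}$ at $y$, i.e. the orthocomplement of $T_yY$ inside $T_yS^{N-1}=y^{\perp}$. On the other hand, following the recipe of Subsection \ref{sub_mean} for the chain $Y\subset S^{N-1}\subset\R^N$, the vector $\bm b^{Y}(\partial_{u_i}y,\partial_{u_j}y)$ is obtained from $\partial_{u_i}\partial_{u_j}y$ by projecting first onto $(T_yY)^{\perp}$ and then onto $T_yS^{N-1}=y^{\perp}$; since $y\in(T_yY)^{\perp}$, the composition of these two projections is the single orthogonal projection onto $y^{\perp}\cap(T_yY)^{\perp}$, i.e. onto the normal space of $X$ at $t y$. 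Hence $(\partial_{u_i}\partial_{u_j}y)^{\perp}=\bm b^{Y}(\partial_{u_i}y,\partial_{u_j}y)$, and the displayed formula becomes $H^{X}_{t y}=t^{-1}H^{Y}_{y}$ (recognizing the right-hand side as the mean curvature of $Y$ in the frame $\{\partial_{u_i}y\}$). In particular the mean curvature field of $X$ vanishes on all of $X$ if and only if that of $Y$ vanishes on all of $Y$, which together with the first paragraph proves the proposition.

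The only place that needs genuine care is exactly this bookkeeping of the two nested orthogonal projections, together with the observation that the radial component of $\partial_{u_i}\partial_{u_j}y$ — which points along $y$ and is tangent to $X$ — is discarded by both of them; everything else (the block structure of $\vec{G}$, the vanishing of the $t$-terms, the factor $t^{-1}$) is routine.
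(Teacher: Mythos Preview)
Your proof is correct and follows essentially the same strategy as the paper: both use the cone parametrization over a local chart of $Y=X\cap S^{N-1}$, observe that the metric is block diagonal with the radial direction split off, note that the normal space to $X$ in $\R^N$ coincides with the normal space to $Y$ in $S^{N-1}$, and invoke the local isometry $S^{N-1}\to\mathbb{P}(\R^N)$ for the projective equivalence. The only cosmetic difference is that the paper parametrizes by $R(u,u_n)=(1+u_n)r(u)$ and evaluates at $u_n=0$ (reducing to the unit sphere via dilations), whereas you keep the free parameter $t$ and obtain the slightly more informative relation $H^{X}_{ty}=t^{-1}H^{Y}_{y}$; your extra care with the regular-value argument for smoothness of $Y$ and the nested-projection bookkeeping is a welcome addition.
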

\begin{proof}
  Observe first that for any $t\in \R\setminus \{0\}$ the dilation
  \begin{align*}
    \delta_{\,t} :\R^N&\ \rightarrow\ \R^N,\\
p&\ \mapsto\ t\hspace{0.01cm}p
  \end{align*}
  is a homothety of $(\R^N,\langle \cdot,\cdot\rangle)$. Thus to prove that a conic submanifold $X\subset \R^N$ is minimal it is enough to show that the mean curvature vector $H_p$ of $X$ is zero at any $p\in X\cap S^{N-1}$. Now, the tangent space to the sphere at $p\in S^{N-1}$ is identified with the space of vectors orthogonal to $p$, that is, $T_{p\,}S^{N-1} = p^{\,\perp} = \{\xi\in \R^N: \langle \xi,p\rangle = 0\}$. Under this identification and in view of the fact that $p\in T_{p\,}X$, at any $p\in X\cap S^{N-1}$ the normal spaces to $X\subset \R^N$ and to $X\cap S^{N-1}\subset S^{N-1}$ coincide, $(T_{p\,}X)^\perp = (T_{p\,} (X\cap S^{N-1}))^\perp\subset p^{\,\perp}\subset \R^N$.

  Assume that $X$ has dimension $n$ and consider a local parametrization $r: U\rightarrow X\cap S^{N-1}$ of $X\cap S^{N-1}$ near $p$ such that $0\in U\subset \R^{n-1}$ and $r(0)=p$. Then the map
  \begin{align*}
  R: U\times (-\varepsilon,\varepsilon)\ &\rightarrow\ X,\\
        u=(u_1,\dots,u_{n-1}), u_n\ &\mapsto\ (1+u_n) r(u),
\end{align*}
is a local parametrization of $X$ near $p$ such that $R(0)=p$.
The metric $G=\left(\langle \partial_{u_i}R(0),\partial_{u_j}R(0)\rangle\right)$ written in the basis
\begin{align}\label{basis_X}
\partial_{u_1}R(0)=\partial_{u_1}r(0),\dots, \partial_{u_{n-1}}R(0)=\partial_{u_{n-1}}r(0), \partial_{u_n}R(0)=p
\end{align}
has a block-diagonal form
\begin{equation*}
G=
\begin{pmatrix}[ccc|c]
    & & & 0 \\
    & \textrm{\large $\left(\langle \partial_{u_i}r(0), \partial_{u_j}r(0)\rangle\right)$}& & \vdots  \\
    & & & 0\\ \hline
   0 & \dots & 0 & 1 \\
  \end{pmatrix},
\end{equation*}
where the upper-left block $g=\left(\langle \partial_{u_i}r(0),\partial_{u_j} r(0)\rangle\right)$ is the metric on $T_{p\,}(X\cap S^{N-1})$.

Since $\partial_{u_n}\partial_{u_n}R(0) = 0$ and $\partial_{u_i}\partial_{u_n}R(0) = \partial_{u_i}r(0)\in T_{p\,}(X\cap S^{N-1})$, $i=1,\dots, n-1$, formula \eqref{b_eucl} implies that the matrix of the second fundamental form of $X\subset \R^N$ at $p$ computed in the basis \eqref{basis_X} has the form
\begin{equation}\label{b(X)}
\left(\textrm{d}^2R(0)\right)^\perp =  \begin{pmatrix}[ccc|c]
     & & & 0 \\
    & \textrm{\large $\left(\left(\partial_{u_i}\partial_{u_j}r(0)\right)^\perp\right)$}& & \vdots  \\
    & & & 0\\ \hline
   0 & \dots & 0 & 0 \\
  \end{pmatrix}.
\end{equation}
Since the normal spaces to $X$ and $X\cap S^{N-1}$ at $p$ coincide,  $\left(\partial_{u_i}\partial_{u_j}r(0)\right)^\perp\in (T_{p\,}X)^\perp=(T_{p\,}(X\cap S^{N-1}))^\perp$ is the value of the second fundamental form of both $X\subset \R^N$ and $X\cap S^{N-1}\subset S^{N-1}$.
Thus the upper-left block in \eqref{b(X)} is the matrix $\left(\textrm{d}^2 r(0)\right)^\perp=\left(\left(\partial_{u_i}\partial_{u_j}r(0)\right)^\perp\right)$ of the second fundamental form of $X\cap S^{N-1}\subset S^{N-1}$ at $p$ written in the basis $\partial_{u_1}r(0), \dots, \partial_{u_{n-1}}r(0)$ of $T_{p\,}(X\cap S^{N-1})$.
As a consequence, the mean curvature vectors of $X\subset \R^N$ and $X\cap S^{N-1}\subset S^{N-1}$ at $p$ are equal,
\begin{align*}
  H(X)_p=\textrm{Tr}\left( G^{-1}\left(\textrm{d}^2R(0)\right)^\perp\right) = \textrm{Tr}\left(g^{-1}\left(\textrm{d}^2r(0)\right)^\perp\right) = H(X\cap S^{N-1})_p.
\end{align*}
In particular, a conic submanifold $X\subset \R^N$ is minimal if and only if its intersection with the sphere $X\cap S^{N-1}$ is minimal in $S^{N-1}$. Finally, since the double covering $S^{N-1}\rightarrow \mathbb{P}(\R^N)$ is, by construction, a local isometry, and since the definition of the mean curvature is also local, the second equivalence in the statement of the proposition follows. 
\end{proof}

In Section \ref{sec:main} we use Proposition \ref{conical} to derive Corollaries \ref{cor_general}, \ref{cor_skew} and \ref{cor_sym} from Theorems \ref{thm_general}, \ref{thm_skew} and \ref{thm_sym} respectively.

\section{Proof of main results}\label{sec:proofs}

In this section we prove Theorems \ref{thm_general}, \ref{thm_skew} and \ref{thm_sym}. Proofs of Theorems \ref{thm_general} and \ref{thm_skew} are performed in local coordinates given by normal forms \eqref{svd} and \eqref{normalform}, while the proof of Theorem \ref{thm_sym} is derived from a general result of Hsiang and Lawson from \cite{HL1971}.
\subsection{Proof of Theorem \ref{thm_general}}\label{sub_general}
We write a matrix $A\in \M_{m,n,r}$ in the SVD form \eqref{svd} $A=U\Sigma V^\mathsf{T}$, where $U\in O(m)$, $V\in O(n)$ and
\begin{align*}
  \Sigma =
  \begin{pmatrix}
    \sigma_1 &        &         &      &          &     & &  &\\
             & \ddots &         &      &    \text{\Large 0}    & &  &       &\\
             &        &\sigma_r &      &          &       & & &\\
             &        &         & 0    &         &       & &\text{{\Huge 0}} &\\
             &      \text{\Large 0}  &         &      & \ddots  & & &       &\\
             &        &         &      &         &  0    &  & & 
  \end{pmatrix}
\end{align*}
is an $m\times n$ diagonal matrix of singular values of $A$. We first assume that the nonzero singular values are ordered and distinct, that is,  $\sigma_1> \dots > \sigma_r>0$. Recall from Section~\ref{sec:prel} that the inner product \eqref{Frob} is invariant under the action \eqref{action} of $O(m)\times O(n)$. Therefore, to prove that $\M_{m,n,r}$ is a minimal submanifold of $(\M_{m,n},\langle\cdot,\cdot\rangle)$ it is enough to show that for any diagonal matrix $A=\Sigma\in \M_{m,n,r}$ the mean curvature vector \eqref{mean2} at $A=\Sigma$ is zero. For this we consider the following parametrization of a neighborhood of $A=\Sigma$:
\begin{align}\label{par}
  A(\bm{\mu},\bm{s},\bm\nu) = \left(\prod_{\substack{1\leq i<j\leq m,\\ i\leq r}} e^{\mu_{ij} L_{ij}}\right)\left(\Sigma+\sum_{h=1}^r s_hE_{hh}\right)\left(\prod_{\substack{1\leq k<\ell\leq n,\\ k\leq r}}e^{-\nu_{k\ell} L_{k\ell}}\right),
\end{align}
where $E_{ij}$ denotes the $(i,j)$th matrix unit, $L_{ij}=E_{ji}-E_{ij}$ and matrices $e^{\mu_{ij}L_{ij}}$ and $e^{-\nu_{k\ell}L_{k\ell}}$ in the two products are ordered according to the lexicographic order on sets of indices $(i,j)$ and $(k,\ell)$. Note that $\mu\mapsto e^{\mu L_{ij}}$ is a smooth one-parameter subgroup of orthogonal matrices such that $e^{0 L_{ij}} = \mathrm{id}$ and $\frac{d}{d\mu} e^{\mu L_{ij}} = L_{ij}e^{\mu L_{ij}} = e^{\mu L_{ij}} L_{ij}$. In particular, $A(\bm 0,\bm 0,\bm 0) = A=\Sigma$ and $\frac{d}{d\mu}\big|_{\mu=0} e^{\mu L_{ij}} = L_{ij}$.
Using this we now compute first order derivatives of the parametrization \eqref{par}. We have
\begin{equation}\label{der_mu}
\footnotesize\begin{aligned}
  \partial_{\mu_{ij}} A(\bm\mu,\bm s,\bm\nu) = \left( e^{\mu_{12}L_{12}}\cdots e^{\mu_{ij}L_{ij}} L_{ij} \cdots e^{\mu_{rm}L_{rm}}\right)\left(\Sigma+\sum_{h=1}^r s_hE_{hh}\right)\left(\prod_{\substack{1\leq k<\ell\leq n,\\ k\leq r}}e^{-\nu_{k\ell} L_{k\ell}}\right)  
\end{aligned}
\end{equation}
for $1\leq i<j\leq m$ with $i\leq r$,
\begin{equation}\label{der_s}
\begin{aligned}
\partial_{s_h}A(\bm\mu,\bm s,\bm \nu) = \left(\prod_{\substack{1\leq i<j\leq m,\\ i\leq r}} e^{\mu_{ij} L_{ij}}\right)E_{hh}\left(\prod_{\substack{1\leq k<\ell\leq n,\\ k\leq r}}e^{-\nu_{k\ell} L_{k\ell}}\right)
\end{aligned}
\end{equation}
for $1\leq h\leq r$ and
\begin{equation}\label{der_nu}
  \footnotesize\begin{aligned}
    \partial_{\nu_{k\ell}} A(\bm\mu,\bm s,\bm\nu) = \left(\prod_{\substack{1\leq i<j\leq m,\\ i\leq r}} e^{\mu_{ij} L_{ij}}\right)\left(\Sigma+\sum_{h=1}^r s_hE_{hh}\right)\left( e^{-\nu_{12}L_{12}}\cdots (-L_{k\ell})e^{-\mu_{k\ell}L_{k\ell}} \cdots e^{-\mu_{rn}L_{rn}}\right)  
   \end{aligned}
\end{equation}
for $1\leq k<\ell\leq n$ with $k\leq r$. Note that matrices \eqref{der_mu}, \eqref{der_s} and \eqref{der_nu} belong to the tangent space to $\M_{m,n,r}$ at the point $A(\bm\mu,\bm s,\bm\nu)$. At $A(\bm 0):=A(\bm 0,\bm 0,\bm 0)=A=\Sigma$ these are equal
\begin{equation}\label{basis}
\begin{aligned}
  \partial_{\mu_{ij}}A(\bm 0) &= L_{ij}\Sigma = \sigma_i E_{ji}-\sigma_j E_{ij},\ &1\leq i<j\leq m,\ i\leq r,\\
  \partial_{s_h}A(\bm 0) &=E_{hh},\ &1\leq h\leq r,\\
  \partial_{\nu_{k\ell}}A(\bm 0) &= \Sigma(- L_{k\ell}) = \sigma_k E_{k\ell}-\sigma_\ell E_{\ell k},\ &1\leq k<\ell\leq n,\ k\leq r,                                     
\end{aligned}
\end{equation}
where we set $\sigma_j=\sigma_\ell=0$ for $j, \ell >r$. There are
\begin{align}\label{count}
  {m \choose 2} - {m-r \choose 2} + r + {n \choose 2} - {n-r \choose 2} = (m+n)r-r^2
\end{align}
matrices in \eqref{basis} and it is easy to see that they are linearly independent. The count \eqref{count} and the formula $\dim(\M_{m,n,r}) = (m+n)r-r^2$ \cite{Harris} imply that \eqref{par} is indeed a parametrization of $\M_{m,n,r}$ around $A(\bm 0)=A$ and, in particular, matrices \eqref{basis} form a basis of the tangent space to $\M_{m,n,r}$ at $A=\Sigma$. We now compute the metric tensor of $\M_{m,n,r}$ at $A$ in this basis. Let us observe that matrices $\partial_{\mu_{ij}} A(\bm 0)$ are orthogonal among themselves. The same holds for $\partial_{s_h}A(\bm 0)$ and for $\partial_{\nu_{k\ell}}A(\bm 0)$. Furthermore, matrices $\partial_{\mu_{ij}} A(\bm 0)$ any $\partial_{\nu_{k\ell}}A(\bm 0)$ are orthogonal to $\partial_{s_h} A(\bm 0)$. Finally, $\partial_{\mu_{ij}} A(\bm 0)$ is orthogonal to $\partial_{\nu_{k\ell}} A(\bm 0)$ unless $i=k$ and $j=\ell\leq r$ in which case their inner product equals $\langle \partial_{\mu_{ij}}A(\bm 0),\partial_{\nu_{ij}}A(\bm 0)\rangle = -2\sigma_i\sigma_j$. Summarizing, in the basis \eqref{basis} the metric tensor $G$ has the following block-diagonal form
\vspace{0.3cm}

\begin{align}\label{G}
G=   \vphantom{
    \begin{matrix}
    \overbrace{XYZ}^{\mbox{$ddd$}}\\ \\ \\ \\ \\ \\
    \underbrace{pqr}
    \end{matrix}}%
\begin{pmatrix}[ccc|ccc|ccc|ccc|ccc]
  \coolover{\substack{\mu_{ij},\\ 1\leq i<j\leq r}}{\phantom{x} & \phantom{\dots} & \phantom{x}} & \coolover{\substack{\nu_{k\ell},\\ 1\leq k<\ell\leq r}}{\phantom{x} & \phantom{\dots} & \phantom{x}} & \coolover{\substack{\mu_{ij},\\ 1\leq i\leq r<j\leq m}}{\phantom{x} & \phantom{\dots} & \phantom{x}} & \coolover{\substack{\nu_{k\ell},\\ 1\leq k\leq r<\ell\leq n}}{\phantom{x} & \phantom{\dots} & \phantom{x}} & \coolover{\substack{s_h,\\ 1\leq h\leq r}}{\phantom{x} & \phantom{\dots} & \phantom{x}} \\
  &  \text{\large $A$} &    &  &  \text{\large $B$} &  &  &  \mbox{\Large $0$} & &   &  \mbox{\Large $0$} &   & &  \mbox{\Large $0$} &  \\
  &   &    &  &   &  &  &   & &   &   &   & &   &\\ \hline
  &   &    &  &   &  &  &   & &   &   &   & &   &\\
  &  \text{\large $B$} &    &  &  \text{\large $A$} &  &  &  \mbox{\Large $0$} & &   &  \mbox{\Large $0$} &   & &  \mbox{\Large $0$} &  \\
   &   &    &  &   &  &  &   & &   &   &   & &   &\\ \hline
  &   &    &  &   &  &  &   & &   &   &   & &   &\\
  &  \mbox{\Large $0$} &    &  &  \mbox{\Large $0$} &  &  &  \text{\large $M$} & &   &  \mbox{\Large $0$} &   & &  \mbox{\Large $0$} &  \\
  &   &    &  &   &  &  &   & &   &   &   & &   &\\ \hline
    &   &    &  &   &  &  &   & &   &   &   & &   &\\
  &  \mbox{\Large $0$} &    &  &  \mbox{\Large $0$} &  &  &  \mbox{\Large $0$} & &   &  \text{\large $N$} &   & &  \mbox{\Large $0$} &  \\
  &   &    &  &   &  &  &   & &   &   &   & &   &\\ \hline
    &   &    &  &   &  &  &   & &   &   &   & &   &\\
  & \mbox{\Large $0$} &    &  &  \mbox{\Large $0$} &  &  &  \mbox{\Large $0$} & &   &  \mbox{\Large $0$} &   & & \text{\large{Id}} &  \\
   &   &    &  &   &  &  &   & &   &   &   & &   &\\ 
 \end{pmatrix}
  \begin{matrix}
  \hspace{-0.45cm}\coolrightbrace{\substack{\mu_{ij},\\ 1\leq i<j\leq r}}{x \\ \text{\Large A} \\ x} \\
  \hspace{-0.45cm}\coolrightbrace{\substack{\nu_{k\ell},\\ 1\leq k<\ell\leq r}}{x \\ \text{\Large A} \\ x} \\
   \coolrightbrace{\substack{\mu_{ij},\\ 1\leq i\leq r<j\leq m}}{x \\ \text{\Large A} \\ x} \\
   \coolrightbrace{\substack{\nu_{k\ell},\\ 1\leq k\leq r<\ell\leq n}}{x \\ \text{\Large A} \\ x} \\
  \hspace{-0.7cm}\coolrightbrace{\substack{s_h,\\ 1\leq h\leq r}}{x \\ \text{\Large A} \\ x} 
\end{matrix}\ ,
\end{align}
where the order in each of the indicated groups of rows and columns of $G$ is induced from the lexicographic order on sets of indices $(i,j)$, $(k,\ell)$ and    
\begin{align*}
\\    A&=
\begin{pmatrix}
\coolover{\substack{\mu_{ij},\\ 1\leq i<j\leq r}}{\ddots & \phantom{\sigma_i^2+\sigma_j^2} & \phantom{\ddots}} \\
    & \sigma_i^2+\sigma_j^2 & \\
    & & \ddots
  \end{pmatrix}
\coolrightbrace{\substack{\mu_{ij},\\ 1\leq i<j\leq r}}{\ddots \\ \sigma^2 \\ \ddots}
        =\begin{pmatrix}
    \coolover{\substack{\nu_{k\ell},\\ 1\leq k<\ell\leq r}}{\ddots & \phantom{\sigma_i^2+\sigma_j^2} & \phantom{\ddots}} \\
    & \sigma_k^2+\sigma_\ell^2 & \\
    & & \ddots
  \end{pmatrix}
        \coolrightbrace{\substack{\nu_{k\ell},\\ 1\leq k<\ell\leq r}}{\ddots \\ \sigma^2 \\ \ddots}\, ,\\
  \\
  B&= \begin{pmatrix}
\coolover{\substack{\nu_{k\ell},\\ 1\leq k<\ell \leq r}}{\ddots & \phantom{-2\sigma_{i=k}\sigma_{j=\ell}} & \phantom{\ddots}} \\
   & -2\sigma_{i=k}\sigma_{j=\ell} & \\
    & & \ddots
  \end{pmatrix}
        \coolrightbrace{\substack{\mu_{ij},\\ 1\leq i<j\leq r}}{\ddots \\ 2\sigma \\ \ddots}
  =
  \begin{pmatrix}
\coolover{\substack{\mu_{ij},\\ 1\leq i< j\leq r}}{\ddots & \phantom{-2\sigma_{i=k}\sigma_{j=\ell}} & \phantom{\ddots}} \\
   & -2\sigma_{k=i}\sigma_{\ell=j} & \\
    & & \ddots
  \end{pmatrix}
        \coolrightbrace{\substack{\nu_{k\ell},\\ 1\leq k<\ell\leq r}}{\ddots \\ 2\sigma_i \\ \ddots}
 \, ,\\
  \\
  M&=
  \begin{pmatrix}
\coolover{\substack{\mu_{ij},\\ 1\leq i\leq r<j\leq m}}{\ddots & \phantom{\sigma_i^2} & \phantom{\ddots}} \\
    & \sigma_{i}^2 & \\
    & & \ddots
  \end{pmatrix}
                \coolrightbrace{\substack{\mu_{ij},\\ 1\leq i\leq r<j\leq m}}{\ddots \\ \sigma^2 \\ \ddots}\, ,\\
  \\N&=
  \begin{pmatrix}
\coolover{\substack{\nu_{k\ell},\\ 1\leq k\leq r<\ell\leq n}}{\ddots & \phantom{\sigma_i^2} & \phantom{\ddots}}\\
    & \sigma_{k}^2 & \\
    & & \ddots
  \end{pmatrix}
                        \coolrightbrace{\substack{\nu_{k\ell},\\ 1\leq k\leq r<\ell\leq n}}{\ddots \\ \sigma_j^2 \\ \ddots}
\end{align*}
are diagonal square matrices of sizes ${r\choose 2}$, ${r\choose 2}$, $r(m-r)$, $r(n-r)$ respectively. Since $G$ is block-diagonal and blocks $A, B, M$ and $N$ are diagonal matrices, the inverse of $G$ equals
\vspace{0.3cm}
\begin{align}\label{Ginverse}
G^{-1}=   \vphantom{
    \begin{matrix}
    \overbrace{XYZ}^{\mbox{$ddd$}}\\ \\ \\ \\ \\ \\
    \underbrace{pqr}
    \end{matrix}}%
\begin{pmatrix}[ccc|ccc|ccc|ccc|ccc]
  \coolover{\substack{\mu_{ij},\\ 1\leq i<j\leq r}}{ & \phantom{-\frac{A}{A^2-B^2}} & } & \coolover{\substack{\nu_{k\ell},\\ 1\leq k<\ell\leq r}}{ & \phantom{-\frac{\text{B}}{\text{A}^2-\text{B}^2}} & } & \coolover{\substack{\mu_{ij},\\ 1\leq i\leq r<j\leq m}}{\phantom{x} & \phantom{\text{M}^{-1}} & \phantom{x}} & \coolover{\substack{\nu_{k\ell},\\ 1\leq k\leq r<\ell\leq n}}{\phantom{x} & \phantom{N^{-1}} & \phantom{x}} & \coolover{\substack{s_h,\\ 1\leq h\leq r}}{\phantom{x} & \phantom{\text{\Large Id}} & \phantom{x}} \\
  &  \frac{A}{A^2-B^2} &    &  &  -\frac{B}{A^2-B^2} &  &  &  \mbox{\Large 0} & &   &  \mbox{\Large 0} &   & &  \mbox{\Large 0} &  \\
  &   &    &  &   &  &  &   & &   &   &   & &   &\\ \hline
  &   &    &  &   &  &  &   & &   &   &   & &   &\\
  &  -\frac{B}{A^2-B^2} &    &  &  \frac{A}{A^2-B^2} &  &  &  \mbox{\Large 0} & &   &  \mbox{\Large 0} &   & &  \mbox{\Large 0} &  \\
   &   &    &  &   &  &  &   & &   &   &   & &   &\\ \hline
  &   &    &  &   &  &  &   & &   &   &   & &   &\\
  &  \mbox{\Large 0} &    &  &  \mbox{\Large 0} &  &  &  M^{-1} & &   &  \mbox{\Large 0} &   & &  \mbox{\Large 0} &  \\
  &   &    &  &   &  &  &   & &   &   &   & &   &\\ \hline
    &   &    &  &   &  &  &   & &   &   &   & &   &\\
  &  \mbox{\Large 0} &    &  &  \mbox{\Large 0} &  &  &  \mbox{\Large 0} & &   &  N^{-1} &   & &  \mbox{\Large 0} &  \\
  &   &    &  &   &  &  &   & &   &   &   & &   &\\ \hline
    &   &    &  &   &  &  &   & &   &   &   & &   &\\
  & \mbox{\Large 0} &    &  &  \mbox{\Large 0} &  &  &  \mbox{\Large 0} & &   &  \mbox{\Large 0} &   & & \text{\large{Id}} &  \\
   &   &    &  &   &  &  &   & &   &   &   & &   &\\ 
 \end{pmatrix}
  \begin{matrix}
  \hspace{-0.45cm}\coolrightbrace{\substack{\mu_{ij},\\ 1\leq i<j\leq r}}{x \\ \text{\Large A} \\ x} \\
  \hspace{-0.45cm}\coolrightbrace{\substack{\nu_{k\ell},\\ 1\leq k<\ell\leq r}}{x \\ \text{\Large A} \\ x} \\
   \coolrightbrace{\substack{\mu_{ij},\\ 1\leq i\leq r<j\leq m}}{x \\ \text{\Large A} \\ x} \\
   \coolrightbrace{\substack{\nu_{k\ell},\\ 1\leq k\leq r<\ell\leq n}}{x \\ \text{\Large A} \\ x} \\
  \hspace{-0.7cm}\coolrightbrace{\substack{s_h,\\ 1\leq h\leq r}}{x \\ \text{\Large A} \\ x} 
\end{matrix}\ ,
\end{align}
with diagonal blocks
\begin{align*}
\\    \frac{A}{A^2-B^2}&=
\begin{pmatrix}
\coolover{\substack{\mu_{ij},\\ 1\leq i<j\leq r}}{\ddots & \phantom{\sigma_i^2+\sigma_j^2} & \phantom{\ddots}} \\
    & \frac{\sigma_i^2+\sigma_j^2}{(\sigma_i^2-\sigma_j^2)^2} & \\
    & & \ddots
  \end{pmatrix}
\coolrightbrace{\substack{\mu_{ij},\\ 1\leq i<j\leq r}}{\ddots \\ \frac{\sigma_i^2+\sigma_j^2}{(\sigma_i^2-\sigma_j^2)^2} \\ \ddots}
        =\begin{pmatrix}
    \coolover{\substack{\nu_{k\ell},\\ 1\leq k<\ell\leq r}}{\ddots & \phantom{\sigma_i^2+\sigma_j^2} & \phantom{\ddots}} \\
    & \frac{\sigma_k^2+\sigma_\ell^2}{(\sigma_k^2-\sigma_\ell^2)^2} & \\
    & & \ddots
  \end{pmatrix}
        \coolrightbrace{\substack{\nu_{k\ell},\\ 1\leq k<\ell\leq r}}{\ddots \\ \frac{\sigma_i^2+\sigma_j^2}{(\sigma_i^2-\sigma_j^2)^2} \\ \ddots}\, ,\\
  \\
  -\frac{B}{A^2-B^2}&= \begin{pmatrix}
\coolover{\substack{\nu_{k\ell},\\ 1\leq k<\ell \leq r}}{\ddots & \phantom{-2\sigma_{i=k}\sigma_{j=\ell}} & \phantom{\ddots}} \\
   & \frac{2\sigma_{i=k}\sigma_{j=\ell}}{(\sigma_{i=k}^2-\sigma_{j=\ell}^2)^2} & \\
    & & \ddots
  \end{pmatrix}
        \coolrightbrace{\substack{\mu_{ij},\\ 1\leq i<j\leq r}}{\ddots \\ \frac{2\sigma_{i=k}\sigma_{j=\ell}}{(\sigma_{i=k}^2-\sigma_{j=\ell}^2)^2} \\ \ddots}
  =
  \begin{pmatrix}
\coolover{\substack{\mu_{ij},\\ 1\leq i< j\leq r}}{\ddots & \phantom{-2\sigma_{i=k}\sigma_{j=\ell}} & \phantom{\ddots}} \\
   & \frac{2\sigma_{k=i}\sigma_{\ell=j}}{(\sigma_{k=i}^2-\sigma_{\ell=j}^2)^2} & \\
    & & \ddots
  \end{pmatrix}
        \coolrightbrace{\substack{\nu_{k\ell},\\ 1\leq k<\ell\leq r}}{\ddots \\ \frac{2\sigma_{i=k}\sigma_{j=\ell}}{(\sigma_{i=k}^2-\sigma_{j=\ell}^2)^2} \\ \ddots}
 \, ,\\
  \\
  M^{-1}&=
  \begin{pmatrix}
\coolover{\substack{\mu_{ij},\\ 1\leq i\leq r<j\leq m}}{\ddots & \phantom{\sigma_i^{-2}} & \phantom{\ddots}} \\
    & \sigma_{i}^{-2} & \\
    & & \ddots
  \end{pmatrix}
                \coolrightbrace{\substack{\mu_{ij},\\ 1\leq i\leq r<j\leq m}}{\ddots \\ \sigma^2 \\ \ddots}\, ,\\
  \\N^{-1}&=
  \begin{pmatrix}
\coolover{\substack{\nu_{k\ell},\\ 1\leq k\leq r<\ell\leq n}}{\ddots & \phantom{\sigma_i^{=2}} & \phantom{\ddots}}\\
    & \sigma_{k}^{-2} & \\
    & & \ddots
  \end{pmatrix}
                        \coolrightbrace{\substack{\nu_{k\ell},\\ 1\leq k\leq r<\ell\leq n}}{\ddots \\ \sigma_j^2 \\ \ddots}\, .
\end{align*}
Now, \eqref{mean2} implies that the mean curvature vector of $\M_{m,n,r}$ at $A$ is equal to
\begin{align}\label{H_A}
  H_A=\textrm{\normalfont{Tr}}\left[G^{-1} \left(\mathrm{d}^2A(\bm 0)\right)^\perp\right],
\end{align}
where $\mathrm{d}^2 A(\bm 0)$ is the matrix of second partial derivatives of \eqref{par} computed at $(\mu,\nu,s)=(\bm 0, \bm 0,\bm 0)$ and $(\mathrm{d}^2A(\bm 0))^\perp$ stands for its normal component (applied entry-wise to $\mathrm{d}^2A(\bm 0)$). From \eqref{mean} we see that in order to compute $H_A$ we need only those second partial derivatives of \eqref{par} that correspond to non-zero entries of $G^{-1}$. For $1\leq i<j\leq m$, $i\leq r$, for $1\leq h\leq r$ and for $1\leq k<\ell\leq n$, $k\leq r$, we have, using \eqref{der_mu}, \eqref{der_s} and \eqref{der_nu}, 
\small\begin{equation}\label{partial_1}
\begin{aligned}
 \partial_{\mu_{ij}}\partial_{\mu_{ij}} A(\bm 0) &= L^2_{ij}\left(\sum_{h=1}^r \sigma_hE_{hh}\right) =(-E_{ii}-E_{jj})\left(\sum_{h=1}^r\sigma_hE_{hh}\right)= -\sigma_iE_{ii}-\sigma_jE_{jj},\\
  \partial_{s_h}\partial_{s_h}A(\bm 0) &= 0,\\
  \partial_{\nu_{k\ell}}\partial_{\nu_{k\ell}} A(\bm 0) &= \left(\sum_{h=1}^r \sigma_hE_{hh}\right)L^2_{k\ell} = \left(\sum_{h=1}^r\sigma_hE_{hh}\right)(-E_{kk}-E_{\ell\ell})=-\sigma_kE_{kk}-\sigma_\ell E_{\ell\ell},
\end{aligned}
\end{equation}
where we recall that $\sigma_j=\sigma_\ell=0$ for $j,\ell>r$. Finally, for $1\leq i=k<j=\ell\leq r$ we have
\begin{align}\label{partial_2}
    \partial_{\mu_{ij}}\partial_{\nu_{ij}}A(\bm 0) = L_{ij}\left(\sum_{h=1}^r\sigma_hE_{hh}\right)(-L_{ij}) = \sigma_{j}E_{ii}+\sigma_iE_{jj}. 
\end{align}
From the form \eqref{Ginverse} of $G^{-1}$ we see that \eqref{partial_1} and \eqref{partial_2} are the only second partial derivatives of \eqref{par} that matter for the formula \eqref{H_A} of $H_A$. Before we proceed to computing normal components of these matrices let us describe the normal space to $\M_{m,n,r}$ at $A=A(\bm 0)$. Matrices
\begin{align}\label{normal}
  E_{pq},\quad r<p\leq m,\ r<q\leq n,
\end{align}
are orthogonal to matrices \eqref{basis} that form a basis of the tangent space $T_A\M_{m,n,r}$. Moreover, \eqref{normal} are independent and there are $(m-r)(n-r)=mn-((m+n)r-r^2) = \textrm{\normalfont{codim}}(\M_{m,n,r})$ many of them. It implies that matrices \eqref{normal} form a basis of the normal space to $\M_{m,n,r}$ at $A$. It is now straightforward to see that \eqref{partial_1} and \eqref{partial_2} are orthogonal to \eqref{normal} or, equivalently, they have trivial normal components. It follows then from the above reasoning that $H_A=0$.

In the beginning of the proof we assumed that the non-zero singular values of $A\in \M_{m,n,r}$ are distinct. Those $A\in \M_{m,n,r}$ that do not satisfy this assumption form an algebraic submanifold $X\subset \M_{m,n,r}$ which is proper because there are obviously matrices in $\M_{m,n,r}\setminus X$. Since the mean curvature vector field $H$ is a smooth field of normal vectors to $\M_{m,n,r}$ and since $H_A=0$ for $A$ in the open and dense subset $\M_{m,n,r}\setminus X\subset \M_{m,n,r}$, we have $H_A=0$ for all $A\in \M_{m,n,r}$.
\qed
\subsection{Proof of Theorem \ref{thm_skew}}\label{sub_skew}
We pursue the same strategy as in the proof of Theorem \ref{thm_general}. Let us write a skew-symmetric matrix $A\in \Sk_{n,r}$ of rank $2r<n$ in the normal form \eqref{normalform}, $A=V^\mathsf{T}\Omega V$, where $V\in O(n)$ and
\begin{align}\label{omega2}
  \Omega = \sum_{h=1}^r \omega_hL_{2h,2h-1}
\end{align}
is as in \eqref{omega}, where recall $L_{ij} = E_{ji}-E_{ij}$. We first assume that $\omega_1>\dots>\omega_r>0$. Since the Frobenius inner product \eqref{Frob} is invariant under the action \eqref{action} of $O(n)\times O(n)$ on $\M_{n,n}$ it is, in particular, invariant under the diagonal subaction \eqref{skaction} of $O(n)$ on $\Sk_{n}$. Thus, in order to prove that $\Sk_{n,2r}$ is a minimal submanifold of $(\Sk_n,\langle \cdot,\cdot\rangle)$ it is enough to show that for any block-diagonal matrix $A=\Omega\in \Sk_{n,2r}$ \eqref{omega2} the mean curvature vector \eqref{mean2} at $A=\Omega$ is zero. For this let us consider the following parametrization of a neighborhood of $A=\Omega$:
\begin{equation}\label{skpar}
\small\begin{aligned}
  A(\bm \mu, \bm s) = \left(\prod_{\substack{1\leq i<j\leq n,\, i\leq 2r,\\ (i,j)\neq (2t-1,2t)}} e^{\mu_{ij}L_{ij}}\right)^\mathsf{T}\left(\sum_{h=1}^r (\omega_h+s_h)L_{2h,2h-1}\right)\left(\prod_{\substack{1\leq k<\ell\leq n,\, k\leq 2r,\\ (k,\ell)\neq (2t-1,2t)}} e^{\mu_{k\ell}L_{k\ell}}\right), 
\end{aligned}
\end{equation}
where orthogonal matrices $e^{\mu_{ij}L_{ij}}$ in the product in \eqref{skpar} are ordered according to the lexicographic order on the set of indices $(i,j)$ and $(i,j)\neq (2t-1,2t)$ means that $i<j$ are not consecutive integers with $i$ being odd. Exactly in the same way as in the proof of Theorem \ref{thm_general} we compute first order derivatives of the parametrization \eqref{skpar}. We have
\begin{equation}\label{skder_mu}
\footnotesize\begin{aligned}
  \partial_{\mu_{ij}}A(\bm \mu,\bm s) &= \left(e^{\mu_{13}L_{13}}\dots L_{ij}e^{\mu_{ij}L_{ij}}\dots e^{\mu_{2r,n}L_{2r, n}}\right)^\mathsf{T}\left(\sum_{h=1}^r(\omega_h+s_h)L_{2h,2h-1}\right)\left(\prod_{\substack{1\leq k<\ell\leq n,\, k\leq 2r,\\ (k,\ell)\neq (2t-1,2t)}} e^{\mu_{k\ell}L_{k\ell}}\right)\\
&+\left(\prod_{\substack{1\leq k<\ell\leq n,\, k\leq 2r,\\ (k,\ell)\neq (2t-1,2t)}} e^{\mu_{k\ell}L_{k\ell}}\right)^\mathsf{T}\left(\sum_{h=1}^r(\omega_h+s_h)L_{2h,2h-1}\right)\left(e^{\mu_{13}L_{13}}\dots L_{ij}e^{\mu_{ij}L_{ij}}\dots e^{\mu_{2r,n}L_{2r, n}}\right)
\end{aligned}
\end{equation}
for $1\leq i<j\leq n$, $i\leq 2r$ with $(i,j)\neq (2t-1,2t)$ and
\begin{equation}\label{skder_s}
  \begin{aligned}
    \partial_{s_h}A(\bm \mu, \bm s) = \left(\prod_{\substack{1\leq i<j\leq n,\, i\leq 2r,\\ (i,j)\neq (2t-1,2t)}} e^{\mu_{ij}L_{ij}}\right)^\mathsf{T} L_{2h,2h-1}   \left(\prod_{\substack{1\leq k<\ell\leq n,\, k\leq 2r,\\ (k,\ell)\neq (2t-1,2t)}} e^{\mu_{k\ell}L_{k\ell}}\right)
  \end{aligned}
\end{equation}
for $1\leq h\leq r$. Matrices \eqref{skder_mu}, \eqref{skder_s} belong to the tangent space to $\Sk_{n,2r}$ at $A(\bm \mu,\bm s)$. At $A(\bm 0):=A(\bm 0,\bm 0) = A=\Omega$ these are equal
\begin{equation}\label{mu=0}
\begin{aligned}
  \partial_{\mu_{ij}}A(\bm 0) &= -L_{ij}\Omega+\Omega L_{ij} = \sum_{h=1}^r \omega_h\, (L_{2h,2h-1}L_{ij}-L_{ij}L_{2h,2h-1})\\
                              &=\sum_{h=1}^r\omega_h \left( L_{i,2h-1} \delta_{j,2h}-L_{j,2h-1}\delta_{i,2h}-L_{i,2h}\delta_{j,2h-1}+L_{j,2h}\delta_{i,2h-1}\right)
\end{aligned}
\end{equation}
for $1\leq i<j\leq n$, $i\leq 2r$, $(i,j)\neq (2t-1,2t)$, where $\delta_{ij}$ is the Kronecker delta symbol, and
\begin{align}\label{s=0}
\partial_{s_h}A(\bm 0) = L_{2h,2h-1},\quad 1\leq h\leq r.
\end{align}
We now elaborate \eqref{mu=0} further and distinguish the following four cases depending on the parity of the indices $i$ and $j$:
\begin{enumerate}[label=(\roman*)]
\item $1\leq i=2p<j=2q\leq n$, $p\leq r$, (both $i$ and $j$ are even)
  \begin{align}\label{elab1}
    \partial_{\mu_{ij}}A(\bm 0) = \omega_p L_{2p-1,2q}+\omega_q L_{2p,2q-1}
  \end{align}
\item $1\leq i=2p<j=2q-1\leq n$, $p\leq r$, ($i$ is even and $j$ is odd)
  \begin{align}\label{elab2}
    \partial_{\mu_{ij}}A(\bm 0) = \omega_p L_{2p-1,2q-1}-\omega_q L_{2p,2q}
  \end{align}
\item $1\leq i=2p-1<j=2q\leq n$, $p\leq r$, $p<q$, ($i$ is odd and $j$ is even)
  \begin{align}\label{elab3}
    \partial_{\mu_{ij}}A(\bm 0) = -\omega_p L_{2p,2q}+\omega_q L_{2p-1,2q-1}
  \end{align}
\item $1\leq i=2p-1<j=2q-1\leq n$, $p\leq r$, (both $i$ and $j$ are odd)
  \begin{align}\label{elab4}
    \partial_{\mu_{ij}}A(\bm 0) = -\omega_p L_{2p,2q-1}-\omega_q L_{2p-1,2q}
  \end{align}
\end{enumerate}
where we set $\omega_q=0$ whenever $r<q$. If $2r<j$ (equivalently, $r<q$), then one can write
\begin{equation}\label{r<q}
\begin{aligned}
  \partial_{\mu_{ij}}A(\bm 0) &= \omega_{\floor{\frac{i+1}{2}}} L_{i-1,j}\quad \textrm{for even $i$ and}\\
  \partial_{\mu_{ij}}A(\bm 0) &= -\omega_{\floor{\frac{i+1}{2}}} L_{i+1,j}\quad \textrm{for odd $i$}.
\end{aligned}
\end{equation}
There are
\begin{align}\label{skcount}
  {n \choose 2} - {n-2r \choose 2} - r + r = {n\choose 2} - {n-2r\choose 2}
\end{align}
matrices in \eqref{mu=0} and \eqref{s=0} and one can see that they are linearly independent using \eqref{elab1}, \eqref{elab2}, \eqref{elab3}, \eqref{elab4}. The count \eqref{skcount} and the formula $\dim(\Sk_{n,2r}) = {n\choose 2}-{n-2r\choose 2}$ \cite[p.$49$]{FL1983} imply that \eqref{skpar} is indeed a parametrization of $\Sk_{n,2r}$ around $A(\bm 0)=A$ and, in particular, matrices \eqref{mu=0} and \eqref{s=0} form a basis of the tangent space to $\Sk_{n,2r}$ at $A=\Omega$. We want to compute the metric tensor of $\Sk_{n,2r}$ at $A$ in this basis. For this let us first fix a particular order of the tangent vectors \eqref{mu=0} and \eqref{s=0}. For each pair of indices $(p,q)$ such that $1\leq p<q\leq r$ we form the group  $\{\partial_{\mu_{2p-1,2q-1}}A(\bm 0)$, $\partial_{\mu_{2p-1,2q}}A(\bm 0)$, $\partial_{\mu_{2p,2q-1}}A(\bm 0)$, $\partial_{\mu_{2p,2q}}A(\bm 0)\}$ of four tangent vectors. We then order these \emph{$(p,q)$-groups} according to the lexicographic order on the set of indices $(p,q)$, $1\leq p<q\leq r$. Next, we put lexicographically ordered vectors $\partial_{\mu_{ij}}A(\bm 0)$, $1\leq i\leq 2r<j\leq n$. Finally, we put $\partial_{s_h}A(\bm 0)$, $1\leq h\leq r$. From \eqref{mu=0} and \eqref{s=0} we see that vectors $\partial_{s_h} A(\bm 0)$ are orthogonal to all $\partial_{\mu_{ij}}A(\bm 0)$ and they are also orthogonal among themselves with $\langle \partial_{s_h}A(\bm 0), \partial_{s_h}A(\bm 0)\rangle = 2$. Each $\partial_{\mu_{ij}}A(\bm 0)$, $1\leq i\leq 2r<j\leq n$, is orthogonal to all other vectors and $\langle \partial_{\mu_{ij}}A(\bm 0),\partial_{\mu_{ij}}A(\bm 0)\rangle = 2\,\omega_{\floor{\frac{i+1}{2}}}^2$. Vectors from two different $(p,q)$-groups are orthogonal to each other and the Gram matrix of a given $(p,q)$-group equals
\vspace{0.3cm}\begin{align}\label{pq}
2G_{p,q}= 2 \vphantom{
    \begin{matrix}
    \overbrace{XYZ}^{\mbox{$ddd$}} \\ \\ \\  
    \underbrace{pqr}
    \end{matrix}}
\begin{pmatrix}
  \coolover{{\scriptstyle(2p-1,2q-1)}}{\phantom{\ }\omega_p^2+\omega_q^2} & \coolover{{\scriptstyle (2p-1,2q)}}{\phantom{aaa}0\phantom{xaa}} & \coolover{{\scriptstyle (2p,2q-1)}}{\phantom{aaa}0\phantom{xaa}} & \coolover{{\scriptstyle (2p,2q)}}{-2\,\omega_p\,\omega_q} \\ 
   0  & \omega_p^2+\omega_q^2  & 2\,\omega_p\,\omega_q  &  0  \\ 
  0 &  2\,\omega_p\,\omega_q &  \omega_p^2+\omega_q^2   & 0  \\ 
  -2\,\omega_p\,\omega_q  & 0 & 0 &  \omega_p^2+\omega_q^2
\end{pmatrix}
  \begin{matrix}
  \hspace{-0.15cm}\coolrightbrace{{\scriptstyle(2p-1,2q-1)}}{ \omega^2 } \\
 \hspace{-0.5cm} \coolrightbrace{{\scriptstyle(2p-1,2q)}}{ \omega^2 } \\
 \hspace{-0.5cm}  \coolrightbrace{{\scriptstyle(2p,2q-1)}}{ \omega^2} \\
  \hspace{-0.9cm} \coolrightbrace{{\scriptstyle(2p,2q)}}{ \omega^2} 
\end{matrix}.
              \end{align}
             Summarizing, the metric tensor $G$ in the basis \eqref{mu=0}, \eqref{s=0} equals

              \begin{align}\label{skG}
G= 2 \vphantom{
\begin{matrix}
  \overbrace{XYZ}^{\mbox{$ddd$}} \\ \\  \\ \\ \\  \\ \\ \\ 
    \underbrace{pqr}
    \end{matrix}}
\begin{pmatrix}[ccc|ccc|ccc]
  \coolover{{\scriptstyle 1\leq p<q\leq r}}{\phantom{\ }\ddots\phantom{\ }  & \phantom{G_{p,q}} & \phantom{\ }\mbox{\large 0}\phantom{\ }} & \coolover{{\scriptstyle 1\leq i\leq 2r<j\leq n}}{\vspace{-0.1cm}\phantom{\ddots} & \phantom{\omega_{\floor{\frac{i+1}{2}}}} & \phantom{\ddots}} & \coolover{{\scriptstyle 1\leq h\leq r}}{\phantom{\ddots} & \phantom{\text{\large{Id}}} & \phantom{\ddots}} \\
  & G_{p,q} & \phantom{\mbox{\huge 0}}  &  & \mbox{\huge 0} &    &          & \mbox{\huge 0} &  \\ 
    \mbox{\large 0} & & \ddots & & & & & & \\ \hline
    & &   &   \ddots & & \mbox{\large 0} &    & & \\
    & \mbox{\huge 0} &   &    & \omega_{\floor{\frac{i+1}{2}}}^2 &   &    & \mbox{\huge 0}& \\
    & &   &  \mbox{\large 0}  &                             &\ddots   &    & & \\ \hline
    & &   &    &                     &                 &    & & \\
    &\mbox{\huge 0} &  &    &\mbox{\huge 0} &    &    & \mbox{\large Id} & \\
    & & & & & & & & 
  \end{pmatrix}
\begin{matrix}
\vspace{-0.45cm}\\\hspace{-0.5cm}\coolrightbrace{{\scriptstyle 1\leq p<q\leq r}}{ \left(\frac{n}{2}\right)\ddots\mbox{\large 0}  \\  \mbox{\huge 0} \\\mbox{\large 0}\ddots } \\
 \hspace{0cm} \coolrightbrace{{\scriptstyle 1\leq i\leq 2r<j\leq n}}{\mbox{\large 0}\ddots \\  \mbox{\huge 0} \omega_{\floor{\frac{i+1}{2}}}^2\\ \ddots\mbox{\large 0} } \\
 \hspace{-0.8cm}  \coolrightbrace{{\scriptstyle 1\leq h\leq r}}{xx \\  \mbox{\huge 0} xx\\ xx } 
\end{matrix}           \,   ,   
              \end{align}
              where the upper-left corner is block-diagonal with $4\times 4$ blocks $G_{p,q}$ from \eqref{pq}. Since $G$ has block-diagonal form its inverse $G^{\, -1}$ equals

              \vspace{0.4cm}                           \begin{align}\label{skGinverse}
G^{\,-1}= \frac{1}{2} \vphantom{
\begin{matrix}
  \overbrace{XYZ}^{\mbox{$ddd$}} \\ \\  \\ \\ \\  \\ \\ \\ 
    \underbrace{pqr}
    \end{matrix}}
\begin{pmatrix}[ccc|ccc|ccc]
  \coolover{{\scriptstyle 1\leq p<q\leq r}}{\phantom{\ }\ddots\phantom{\ }  & \phantom{G_{p,q}} & \phantom{\ }\mbox{\large 0}\phantom{\ }} & \coolover{{\scriptstyle 1\leq i\leq 2r<j\leq n}}{\vspace{-0.1cm}\phantom{\ddots} & \phantom{\omega_{\floor{\frac{i+1}{2}}}} & \phantom{\ddots}} & \coolover{{\scriptstyle 1\leq h\leq r}}{\phantom{\ddots} & \phantom{\text{\large{Id}}} & \phantom{\ddots}} \\
  & G_{p,q}^{-1} & \phantom{\mbox{\huge 0}}  &  & \mbox{\huge 0} &    &          & \mbox{\huge 0} &  \\ 
    \mbox{\large 0} & & \ddots & & & & & & \\ \hline
    & &   &   \ddots & & \mbox{\large 0} &    & & \\
    & \mbox{\huge 0} &   &    & \omega_{\floor{\frac{i+1}{2}}}^{-2} &   &    & \mbox{\huge 0}& \\
    & &   &  \mbox{\large 0}  &                             &\ddots   &    & & \\ \hline
    & &   &    &                     &                 &    & & \\
    &\mbox{\huge 0} &  &    &\mbox{\huge 0} &    &    & \mbox{\large Id} & \\
    & & & & & & & & 
  \end{pmatrix}
\begin{matrix}
\vspace{-0.45cm}\\\hspace{-0.5cm}\coolrightbrace{{\scriptstyle 1\leq p<q\leq r}}{ \left(\frac{n}{2}\right)\ddots\mbox{\large 0}  \\  \mbox{\huge 0} \\\mbox{\large 0}\ddots } \\
 \hspace{0cm} \coolrightbrace{{\scriptstyle 1\leq i\leq 2r<j\leq n}}{\mbox{\large 0}\ddots \\  \mbox{\huge 0} \omega_{\floor{\frac{i+1}{2}}}^2\\ \ddots\mbox{\large 0} } \\
 \hspace{-0.8cm}  \coolrightbrace{{\scriptstyle 1\leq h\leq r}}{xx \\  \mbox{\huge 0} xx\\ xx } 
\end{matrix}           \,   ,   
              \end{align} 
              where inverses of blocks $G_{p,q}$ are easily found to be

              \vspace{0.3cm}\begin{align*}
G_{p,q}^{-1}= \frac{1}{(\omega_p^2-\omega_q^2)^2}\vphantom{
    \begin{matrix}
    \overbrace{XYZ}^{\mbox{$ddd$}} \\ \\ \\  
    \underbrace{pqr}
    \end{matrix}}
\begin{pmatrix}
  \coolover{{\scriptstyle(2p-1,2q-1)}}{\phantom{\ }\omega_p^2+\omega_q^2} & \coolover{{\scriptstyle (2p-1,2q)}}{\phantom{aaa}0\phantom{xaa}} & \coolover{{\scriptstyle (2p,2q-1)}}{\phantom{aaa}0\phantom{xaa}} & \coolover{{\scriptstyle (2p,2q)}}{2\,\omega_p\,\omega_q} \\ 
   0  & \omega_p^2+\omega_q^2  & -2\,\omega_p\,\omega_q  &  0  \\ 
  0 & - 2\,\omega_p\,\omega_q &  \omega_p^2+\omega_q^2   & 0  \\ 
  2\,\omega_p\,\omega_q  & 0 & 0 &  \omega_p^2+\omega_q^2
\end{pmatrix}
  \begin{matrix}
  \hspace{-0.15cm}\coolrightbrace{{\scriptstyle(2p-1,2q-1)}}{ \omega^2 } \\
 \hspace{-0.5cm} \coolrightbrace{{\scriptstyle(2p-1,2q)}}{ \omega^2 } \\
 \hspace{-0.5cm}  \coolrightbrace{{\scriptstyle(2p,2q-1)}}{ \omega^2} \\
  \hspace{-0.9cm} \coolrightbrace{{\scriptstyle(2p,2q)}}{ \omega^2} 
\end{matrix}.
              \end{align*}              
              Recall from \eqref{mean2} that the mean curvature of $\Sk_{n,2r}$ at $A$ can be computed as
              \begin{align}\label{skH_A}
  H_A= \textrm{\normalfont{Tr}}\left[G^{-1} \left(\mathrm{d}^2A(\bm 0)\right)^\perp \right],
\end{align}
where $\mathrm{d}^2 A(\bm 0)$ is the matrix of second partial derivatives of \eqref{skpar} computed at $(\mu,s)=(\bm 0, \bm 0)$ and $(\mathrm{d}^2A(\bm 0))^\perp$ stands for its normal component (applied entry-wise to $\mathrm{d}^2A(\bm 0)$). As in the proof of Theorem \ref{thm_general} we see from \eqref{mean} that in order to compute $H_A$ we need only those second partial derivatives of \eqref{skpar} that correspond to non-zero entries of $G^{-1}$. So, the derivatives we have to look at are $\partial_{s_h}\partial_{s_h}A(\bm 0)$, where $1\leq h\leq r$, $\partial_{\mu_{ij}}\partial_{\mu_{ij}} A(\bm 0)$, where $1\leq i\leq 2r<j\leq n$, and $\partial_{\mu_{ij}}\partial_{\mu_{k\ell}} A(\bm 0)$, where $(i,j)$ and $(k,\ell)$ belong to the same $(p,q)$-block. First of all, $\partial_{s_h}\partial_{s_h} A(\bm 0) = 0$ for $1\leq h\leq r$. Second, from \eqref{skder_mu} and \eqref{mu=0} we have that for any $1\leq i\leq 2r<j\leq n$ 
\begin{equation}\label{ijij}
\begin{aligned}
  \partial_{\mu_{ij}}\partial_{\mu_{ij}}A(\bm 0) &= L_{ij}^2\Omega - 2L_{ij}\Omega L_{ij} + \Omega L_{ij}^2=-L_{ij}\,\partial_{\mu_{ij}}A(\bm 0)+\partial_{\mu_{ij}}A(\bm 0) L_{ij}\\
  &=
     \omega_{\floor{\frac{i+1}{2}}}\cdot  \begin{cases}
       &-L_{ij}L_{i-1,j}+L_{i-1,j}L_{ij}\quad \textrm{if $i$ is even}\\ 
       &L_{ij}L_{i+1,j} - L_{i+1,j}L_{ij}\quad \textrm{if $i$ is odd}
     \end{cases}\\
  & =\omega_{\floor{\frac{i+1}{2}}}\cdot\begin{cases}
       & L_{i-1,i}\quad \textrm{if $i$ is even}\\ 
       & L_{i,i+1}\quad \textrm{if $i$ is odd}
     \end{cases},
   \end{aligned}
   \end{equation}
where in the second equality we used \eqref{r<q}.
Third, for any $(i,j)$ and $(k,\ell)$ with $(i,j)\leq (k,\ell)$ (in the lexicographic order) \eqref{skder_mu} and \eqref{mu=0} imply that
\small\begin{equation}\label{ijkl}
  \partial_{\mu_{ij}}\partial_{\mu_{kl}}A(\bm 0) = L_{k\ell}L_{ij}\Omega - L_{k\ell}\Omega L_{ij}-L_{ij}\Omega L_{k\ell} + \Omega L_{ij}L_{k\ell}=-L_{k\ell}\,\partial_{\mu_{ij}}A(\bm 0)+\partial_{\mu_{ij}}A(\bm 0) L_{k\ell}.
\end{equation}
Assuming that the indices $(i,j)$ and $(k,\ell)$ belong to the same $(p,q)$-group and using \eqref{elab1}, \eqref{elab2}, \eqref{elab3} and \eqref{elab4} we now elaborate \eqref{ijkl}, conveniently storing the result in a matrix, 

\vspace{0.3cm}  \begin{align}\label{4x4}
    \left(\partial_{\mu_{ij}}\partial_{\mu_{k\ell}} A(\bm 0)\right)=
    \vphantom{
    \begin{matrix}
    \overbrace{XYZ}^{\mbox{$ddd$}} \\ \\ \\  
    \underbrace{pqr}
    \end{matrix}}
\begin{pmatrix}
  \coolover{{\scriptstyle(2p-1,2q-1)}}{\substack{\omega_pL_{2p-1,2p}\\+\omega_qL_{2q-1,2q}}} & \coolover{{\scriptstyle (2p-1,2q)}}{\vspace{0.1cm}\phantom{\,xxx}0\phantom{\,xxx}} & \coolover{{\scriptstyle (2p,2q-1)}}{\vspace{0.1cm}\phantom{\,xxx}0\phantom{\,xxx}} & \coolover{{\scriptstyle (2p,2q)}}{\substack{-\omega_pL_{2q-1,2q}\\-\omega_qL_{2p-1,2p}}} \\ 
   0  & \substack{\omega_pL_{2p-1,2p}\\+\omega_qL_{2q-1,2q}}  & \substack{\omega_pL_{2q-1,2q}\\+\omega_qL_{2p-1,2p}} &  0  \vspace{0.2cm}\\
  0 &  \substack{\omega_pL_{2q-1,2q}\\+\omega_qL_{2p-1,2p}} &  \substack{\omega_pL_{2p-1,2p}\\+\omega_qL_{2q-1,2q}}   & 0  \\ 
  \substack{-\omega_pL_{2q-1,2q}\\-\omega_qL_{2p-1,2p}}  & 0 & 0 &  \substack{\omega_pL_{2p-1,2p}\\+\omega_qL_{2q-1,2q}}
\end{pmatrix}
  \begin{matrix}
  \hspace{-0.15cm}\coolrightbrace{{\scriptstyle(2p-1,2q-1)}}{ \substack{\omega_pL_{2p-1,2p}\\+\omega_qL_{2q-1,2q}} } \\
 \hspace{-0.5cm} \coolrightbrace{{\scriptstyle(2p-1,2q)}}{ \substack{\omega_pL_{2p-1,2p}\\+\omega_qL_{2q-1,2q}}} \\
 \hspace{-0.5cm}  \coolrightbrace{{\scriptstyle(2p,2q-1)}}{ \substack{\omega_pL_{2p-1,2p}\\+\omega_qL_{2q-1,2q}}} \\
  \hspace{-0.9cm} \coolrightbrace{{\scriptstyle(2p,2q)}}{\substack{\omega_pL_{2p-1,2p}\\+\omega_qL_{2q-1,2q}}} 
\end{matrix}.
              \end{align}
            In order to find normal components of matrices \eqref{ijij} and \eqref{ijkl} we need to describe the normal space to $\Sk_{n,2r}$ at $A$. For this let us observe that matrices
            \begin{align}\label{sknormal}
              L_{ab},\quad 2r<a<b\leq n,
            \end{align}
            are orthogonal to matrices \eqref{mu=0}, \eqref{s=0} that form a basis of the tangent space $T_A\Sk_{n,2r}$. Moreover,  \eqref{sknormal} are independent and there are ${n-2r \choose 2} = \textrm{\normalfont{codim}}(\Sk_{n,2r})$ many of them. It implies that matrices \eqref{sknormal} form a basis of the normal space to $\Sk_{n,2r}$ at $A$. It is now elementary to check that the relevant second derivatives \eqref{ijij} and \eqref{4x4} are orthogonal to \eqref{sknormal} or, equivalently, they have trivial normal components. It follows from the above reasoning that $H_A=0$.
            
In the beginning of the proof we assumed that the non-zero eigenvalues of $A\in \Sk_{n,2r}$ are distinct. Those $A\in \Sk_{n,2r}$ that do not satisfy this assumption form an algebraic submanifold $X\subset \Sk_{n,2r}$ which is proper because there are obviously matrices in $\Sk_{n,2r}\setminus X$. Since the mean curvature vector field $H$ is a smooth field of normal vectors to $\Sk_{n,2r}$ and since $H_A=0$ for $A$ in the open and dense subset $\Sk_{n,2r}\setminus X\subset \Sk_{n,2r}$, we have $H_A=0$ for all $A\in \Sk_{n,2r}$.\qed 

\subsection{Proof of Theorem \ref{thm_sym}}\label{sub_sym}

Our proof uses a general result of Hsiang and Lawson from \cite{HL1971} which we state now.
Let $(M,\bm g)$ be a Riemannian manifold and let $G$ be a compact, connected group acting smoothly by isometries on $M$. For $x\in M$ let $G_x$ be the stabilizer of $x$, that is, the group of those transformations in $G$ that fix $x$. Let us consider the equivalence relation on $M$ for which points $x, y\in M$ are said to be equivalent if their stabilizers $G_x$ and $G_y$ are conjugate, that is, $g\,G_xg^{-1}=G_y$ for some $g\in G$. By \cite[Sec. $1.3$]{HL1971}, equivalence classes of this relation are minimal submanifolds of $(M,\bm g)$.

Let us now apply this result to $(M,\bm g) = (\Sym_n,\langle \cdot,\cdot\rangle)$ and the action \eqref{symaction} of $G=O(n)$. Recall from Subsection \ref{svd} that this action preserves the inner product \eqref{Frob}.   
By Corollary \ref{conj}, sets $\Sym_{n,\vec\kappa}\subset \Sym_n$ are precisely the equivalence classes of the above defined equivalence relation on $\Sym_n$ and hence they are minimal submanifolds of $(\Sym_n,\langle\cdot,\cdot\rangle)$. \qed  

\bibliographystyle{amsalpha}

\end{document}